\newcommand{\CC}{\mathcal{C}}
\newtheorem{theorem}{Theorem}
\newtheorem{observation}[theorem]{Observation}
\newtheorem{lemma}[theorem]{Lemma}
\newtheorem{corollary}[theorem]{Corollary}
\newtheorem{conjecture}[theorem]{Conjecture}
\theoremstyle{definition}
\newtheorem{example}[theorem]{Example}
\title{Towards Characterization of 5-List-Colorability of Toroidal Graphs}
\author{
Zden\v{e}k Dvo\v{r}\'ak\thanks{Computer Science Institute, Charles University, Prague, Czech Republic
E-mail: \protect\nolinkurl{rakdver@iuuk.mff.cuni.cz}.  Supported by project 22-17398S (Flows and cycles in graphs on surfaces) of Czech Science Foundation.}
\and
Félix Moreno Peñarrubia\thanks{Centre de Formació Interdisciplinària Superior (CFIS) - Universitat Politècnica de Catalunya (UPC). Partially supported by the mobility grants program of Centre de Formació Interdisciplinària Superior (CFIS) - Universitat Politècnica de Catalunya (UPC)}
}
\date{\today}
\begin{document}

\maketitle

\begin{abstract}
Through computer-assisted enumeration, we list minimal obstructions
for 5-choosability of graphs on the torus with the following additional
property: There exists a cyclic system of non-contractible triangles around
the torus where the consecutive triangles are at distance at most four.
This condition is satisfied by all previously known obstructions, and we verify that there
are no additional obstructions with this property.  This supports the conjecture
that a toroidal graph is 5-choosable if and only if it is 5-colorable.
\end{abstract}

\section{Introduction}

Thomassen~\cite{thomassentorus} proved that 5-colorability of graphs drawn on the torus
is exactly characterized by four forbidden subgraphs.  That is, he proved that there are only 
four $6$-critical graphs embeddable on the torus (a graph is \emph{$k$-critical} if it is
an inclusionwise-minimal graph of chromatic number at least $k$). Later, he proved
that this result can be generalized to all other surfaces~\cite{thomassenfixedsurface}:
For any fixed surface $\Sigma$, there are only finitely many $6$-critical graphs embeddable on $\Sigma$.

List coloring (or choosability) is a variant of vertex coloring in which, instead of 
choosing the colors of the vertices from a fixed set, each vertex has its own list from 
which its color must be chosen.  A graph is said to be $k$-choosable 
if it is colorable for every list assignment with lists of size $k$. Voigt~\cite{voigt1993} 
exhibited a planar graph that is not $4$-choosable, and Thomassen~\cite{thomassenplanargraphchoosable}
proved that every planar graph is $5$-choosable.

Postle~\cite{postlethesis} proved that analogously to the 5-coloring case,
for any fixed surface $\Sigma$, the $5$-choosability of graphs drawn on $\Sigma$ is
exactly characterized by finitely many obstructions. 
However, the question of finding an explicit characterization 
remains unanswered for all surfaces except for the sphere (where all graphs are $5$-choosable) and the projective plane
(in which the only obstruction to $5$-choosability is $K_6$~\cite{bohmedirac}). In particular, it is not known if there exists any additional 
minimal obstruction for $5$-choosability in the torus beyond the ones that are also 
obstructions for $5$-colorability. 

In this article we develop techniques for the explicit enumeration of critical graphs in 
list-coloring problems, and present the computational results of our implementation. 

\section{Summary of Our Results}

Let us give the definitions needed to state our results precisely.
Let $G$ be a graph. A \emph{list assignment} for $G$ is a function $L : V(G) \rightarrow 2^{\mathbb{N}}$;
it is a \emph{$k$-list assignment} if $|L(v)| \geq k$ for every $v\in V(G)$.
An \emph{$L$-coloring} of $G$ from a list assignment $L$ is a proper coloring $\phi$ such that $\phi(v) \in L(v)$ for every $v\in V(G)$.
A graph $G$ is \emph{$k$-choosable} if $G$ has an $L$-coloring for every $k$-list assignment $L$. 

A graph is $G$ is \emph{$L$-critical} for a list assignment $L$ if $G$ has no $L$-coloring
but every proper subgraph of $G$ has an $L$-coloring.
The graph $G$ is \emph{critical for $k$-list coloring} if there exists a $k$-list assignment $L$ such that
$G$ is $L$-critical.  It is \emph{critical for $k$-choosability} if $G$ is not $k$-choosable,
but every proper subgraph of $G$ is $k$-choosable.  Note that a graph is critical for $k$-choosability
if and only if it is critical for $k$-list coloring but no proper subgraph of $G$ is critical for $k$-list coloring~\cite{onlistcritical}.

If $F$ is a connected graph drawn on the sphere $\Sigma_0$ and $f_1$ and $f_2$ are distinct faces of $F$ bounded by cycles, then we say that $(F,f_1,f_2)$
is a \emph{cylindrical fragment} with \emph{boundary faces} $f_1$ and $f_2$. The \emph{spacing} of the fragment 
is the distance between the cycles bounding $f_1$ and $f_2$ in $F$.  Suppose that $G$ is a graph drawn
on the torus and $C$ is a non-contractible cycle in $G$.  If we cut the torus along $C$ (duplicating the vertices and edges of $C$)
and patch the resulting holes, we obtain a cylindrical fragment with boundary faces formed by the patches.  We call
this cylindrical fragment $(F,f_1,f_2)$ the \emph{$C$-fragment} of $G$.  Note that there exists a natural projection $\pi$
from $\Sigma_0\setminus (f_1\cup f_2)$ to the torus that maps the boundary cycles of $f_1$ and $f_2$ to $C$ and is otherwise injective.

For any cycle $C''$ in $F$ that separates $f_1$ from $f_2$, we say that the cycle $C'=\pi(C'')$ in $G$ is \emph{laminar} with $C$.
Observe in particular that two non-contractible \emph{triangles} in $G$ are laminar if and only if they are homotopically equivalent.
For $i\in\{1,2\}$, let $F_i$ be the subgraph of $F$ drawn between
$f_i$ and $C''$ (including the cycle $C''$) and let $f'_{3-i}$ be the face of $F_i$ bounded by $C''$.  Then $(F_i,f_i,f'_{3-i})$ is
a cylindrical fragment, which we call a \emph{$(C,C')$-fragment} of $G$.  In the case that $C=C'$, we define there to be only one $(C,C')$-fragment, equal to
$(F,f_1,f_2)$.

A \emph{cyclic system of non-contractible triangles} in $G$ is a cyclic sequence $\CC=C_1, \ldots, C_m$ of distinct
laminar non-contractible triangles in $G$ such that for each $i\in\{1,\ldots,m\}$, there is a
$(C_i,C_{i+1})$-fragment $(F,f_1,f_2)$ such that $\pi(F)$ does not contain any of the triangles of $\CC\setminus\{C_i,C_{i+1}\}$
(where $C_{m+1}=C_1$).  We call any such fragment a \emph{$\CC$-fragment}.
Note that there are exactly $m$ $\CC$-fragments.  The \emph{spacing} of the system $\CC$ is the maximum of the spacings
of its $\CC$-fragments.

Our main result is as follows.
\begin{theorem}\label{thm-main}
Suppose that $G$ is a graph drawn on the torus so that $G$ contains a cyclic system of non-contractible triangles of spacing at most four.
Then $G$ is critical for 5-choosability if and only if $G$ is one of the four $6$-critical graphs obtained in~\cite{thomassentorus}.
\end{theorem}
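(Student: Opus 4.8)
The plan is to reduce Theorem~\ref{thm-main} to a finite computation by exploiting the cyclic system of non-contractible triangles $\CC = C_1,\dots,C_m$ of spacing at most four. The first step is a structural reduction: I would argue that if $G$ is critical for 5-choosability, then each $\CC$-fragment is a ``small'' cylindrical fragment in a precise sense. Concretely, consider one $\CC$-fragment $(F,f_1,f_2)$ with boundary triangles $C_i$ and $C_{i+1}$ at distance at most four. Critical graphs for list coloring have bounded structure ``between'' short separating cycles: using the standard machinery for $L$-critical graphs (no separating cliques, degeneracy-type bounds, and the fact — essentially due to Thomassen's technique and its list-coloring refinements, e.g.\ in Postle's thesis — that a precoloring of two triangles far apart on a cylinder extends unless the cylinder is short), one shows that the portion of $G$ captured by $\pi(F)$ has bounded size. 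The key point is that each $\CC$-fragment, being a cylinder with the two boundary triangles at distance $\le 4$, can contain only boundedly many vertices if $G$ is to remain critical; otherwise one finds a proper subgraph that is still not $L$-colorable, contradicting criticality.

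The second step is to bound $m$, the number of triangles in the system. Here I would use that the $C_i$ are pairwise laminar and homotopically nontrivial, hence mutually homotopic, so they ``stack'' around the torus; each consecutive pair is joined by a bounded-size $\CC$-fragment, and the union of all $\CC$-fragments is the whole $C$-fragment of $G$ (for $C = C_1$). If $m$ is large, then two of the triangles $C_i, C_j$ are ``parallel and close'' in a way that again lets us reroute a coloring or contract a planar piece, contradicting criticality — this is the same flavor of argument Thomassen uses to bound the width of critical toroidal graphs. Combining with Step 1, $G$ has bounded size overall: it is a toroidal graph whose ``canonical'' non-contractible cycle has bounded-length fragment decomposition, so $|V(G)|$ is at most some explicit constant $N$.

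The third step is the computer-assisted enumeration. Having reduced to graphs of bounded size admitting such a triangle system, I would enumerate all candidate graphs drawn on the torus with $|V(G)| \le N$ and the prescribed cyclic triangle structure (in practice, building them $\CC$-fragment by $\CC$-fragment, since each fragment is small), and for each one test whether it is critical for 5-choosability. The criticality test itself requires care: deciding whether a graph has a bad 5-list assignment is the nontrivial algorithmic kernel, but it can be organized via the known reduction to checking list-colorability for a bounded family of ``hard'' list assignments, or by propagating list-size and connectivity reductions and then brute-forcing the remaining core. One then checks that the only graphs surviving this test are Thomassen's four $6$-critical toroidal graphs, and conversely confirms (easily, by inspection) that each of those four does contain a cyclic system of non-contractible triangles of spacing at most four, so they are genuinely in scope.

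The main obstacle I expect is Step 1 together with making the bound $N$ small enough to be computationally feasible: a naive bound on the size of a $\CC$-fragment coming from the abstract theory (Postle-style) will be astronomically large, so the real work is a hands-on analysis of short cylinders — showing that between two triangles at distance $\le 4$ a critical graph can only look like one of a short list of ``gadgets'' — to get $N$ down to something enumerable. Secondarily, the enumeration must be designed so that the search space (rooted toroidal embeddings with a marked triangle system, modulo the torus mapping class group and graph isomorphism) is pruned aggressively enough; symmetry handling and early rejection of non-critical pieces are what make the computation terminate. I would also need to be careful that the ``if'' direction — that the four $6$-critical graphs really are critical for $5$-choosability and not merely for $5$-colorability — is verified, though this follows since a $6$-critical graph has no $5$-coloring, hence no $L$-coloring for the constant-list assignment, and minimality transfers.
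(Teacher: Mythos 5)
Your high-level plan---reduce to bounded size via the cyclic triangle system, then enumerate and test---matches the paper's strategy, but the concrete reductions you propose have a real gap precisely where the paper does its hardest work. Your Step~2, bounding the number $m$ of triangles in the system, rests on an unsubstantiated claim that if $m$ is large then two triangles are ``parallel and close'' in a way that lets you ``reroute a coloring or contract a planar piece.'' No such direct topological argument exists here, and it is not clear one could be made to work: the triangles need not be pairwise close, and rerouting a list-coloring across a cylinder is exactly the hard part. The paper replaces this step with Theorem~\ref{thm:prismcanvasresults}, which is itself proved \emph{computationally}: by the analogue of Lemma~\ref{lemma-cyclecan}, a critical prism-canvas with a separating triangle splits into two smaller critical prism-canvases, so one enumerates all critical prism-canvases of spacing at most four and checks by brute force that gluing any pair of them never produces a critical prism-canvas of spacing at least five. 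The conclusion is then that the whole $T$-fragment is a single critical prism-canvas of spacing at most four, of which there are finitely many, and this is what bounds $|V(G)|$---not a separate bound on $m$.

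Two secondary issues. First, your Step~1 and Step~3 lean on a generic bound of the form ``short cylinders in critical graphs are small'' and then propose to enumerate all toroidal graphs up to that bound; you correctly anticipate the bound would be astronomical, but you do not supply the tools that actually tame the search. The paper's pipeline is structured: chordless critical cycle-canvases are generated by repeatedly applying the Cycle Chord or Tripod Theorem (Theorem~\ref{thm:cyclechordtripod}), critical prism-canvases of spacing $d$ are built by pasting cycle-canvases of circumference at most $2d+6$ into the faces of a skeleton, and the toroidal graphs are obtained only at the very end by identifying the two boundary triangles of a prism-canvas. Without this decomposition the enumeration does not terminate in practice. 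Second, your remark that the ``if'' direction ``follows since\ldots minimality transfers'' is wrong as stated: $6$-criticality guarantees every proper subgraph is $5$-\emph{colorable}, not $5$-\emph{choosable}, so the claim that the four graphs are critical for $5$-choosability (and not merely for $5$-list-coloring, cf.\ the $K_7$ example in the paper) requires its own verification.
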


Let us remark that $K_7$ is a graph that can be drawn in the torus which is critical for 
$5$-list coloring, but neither critical for $5$-choosability nor $6$-critical. 
We also verified that $K_7$ is the only graph with this property, i.e., critical for $5$-list coloring,
not critical for 5-choosability, and satisfying the assumptions of Theorem~\ref{thm-main}.

We believe that the cyclic system assumption in Theorem~\ref{thm-main} can be dropped, i.e., that the following conjecture holds.

\begin{conjecture}\label{conj-equiv}
A graph drawn on the torus is critical for 5-choosability if and only if it is $6$-critical.
\end{conjecture}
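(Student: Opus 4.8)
\textbf{Proof proposal for Conjecture~\ref{conj-equiv}.}
The plan is to derive the conjecture from Theorem~\ref{thm-main} by removing the spacing hypothesis. The ``if'' direction is immediate: the four $6$-critical graphs of~\cite{thomassentorus} do admit a cyclic system of non-contractible triangles of spacing at most four (this is precisely the assertion that the hypothesis of Theorem~\ref{thm-main} covers all previously known obstructions), so Theorem~\ref{thm-main} itself certifies that each of them is critical for $5$-choosability. Hence all the content lies in the ``only if'' direction, and it suffices to prove the following structural statement: every graph $G$ drawn on the torus that is critical for $5$-choosability contains a cyclic system of non-contractible triangles of spacing at most four. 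Given this, Theorem~\ref{thm-main} finishes the argument and in particular forces $G$ to be one of the four $6$-critical graphs. Note that criticality for $5$-choosability, rather than merely for $5$-list-coloring, is essential here: $K_7$ is critical for $5$-list-coloring and satisfies the spacing hypothesis, yet is neither $6$-critical nor a counterexample.

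To prove the structural statement I would proceed in two stages. First, bound the edge-width. A graph critical for $5$-choosability has minimum degree at least $5$ and is non-planar (planar graphs are $5$-choosable by Thomassen~\cite{thomassenplanargraphchoosable}), so its drawing on the torus has a non-contractible cycle; moreover, since locally planar graphs on any fixed surface are $5$-choosable (DeVos--Kawarabayashi--Mohar; see also the hyperbolic-families machinery of Postle and Thomas), the edge-width of $G$ is at most some absolute constant $w_0$. Fix a shortest non-contractible cycle $C$, so $|C|\le w_0$, and consider the $C$-fragment $(F,f_1,f_2)$, a cylinder whose two boundary cycles are copies of $C$.

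Second, and this is the heart of the matter, bound the ``circumferential complexity'' of $G$ and realize it by triangles of small spacing. The point is that a graph critical for $5$-list-coloring cannot contain an arbitrarily long cylindrical stretch that is poor in non-contractible triangles: using that $G$ is $L$-critical for some $5$-list assignment $L$ (so $G-e$ is $L$-colorable for every edge $e$), together with the theory of critical graphs confined to disks and cylinders with precolored boundary (Thomassen-style precoloring extension, and the cylinder-structure results of Postle and Thomas), one should be able to show that between the two copies of $C$ there is, at bounded distance from each, a laminar non-contractible cycle, and that iterating this produces a decomposition of the torus into a bounded number of narrow bands. Inside a narrow band, minimum degree $5$ together with the bound on the band's width forces a non-contractible triangle; one argues further that consecutive such triangles can be selected at distance at most four, since otherwise a too-wide triangle-free subcylinder would, by precoloring extension, admit a recoloring contradicting criticality. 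Assembling the triangles cyclically gives the required system and reduces the problem to Theorem~\ref{thm-main}.

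The main obstacle is exactly this second stage. In effect one is asking for an explicit, enumeration-friendly strengthening of Postle's theorem~\cite{postlethesis} that there are only finitely many $6$-list-critical graphs on the torus: Postle's bound exists but is far too large to enumerate, and the torus-specific structure — that the obstructions are ``generated'' by cyclic systems of triangles of spacing at most four — is precisely what must be established. I expect this to require new reducibility and discharging arguments tailored to long cylindrical regions, showing that any stretch of the torus without nearby non-contractible triangles admits a local modification preserving non-$L$-colorability and is therefore absent from a critical graph; confirming that the optimal constant is four (as the known obstructions and the enumeration behind Theorem~\ref{thm-main} suggest) is part of the same difficulty. A purely computational route, tightening the general-surface bounds of~\cite{postlethesis} down to the feasible range, seems hopeless without this structural input.
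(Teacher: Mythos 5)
The statement you are trying to prove is stated in the paper as a \emph{conjecture}, and the paper does not prove it; it only proves the special case in which the graph contains a cyclic system of non-contractible triangles of spacing at most four (Theorem~\ref{thm-main}), and it reduces the general case to two further open conjectures (Conjectures~\ref{conj-ew4} and~\ref{con:tpp}). Your proposal is an outline whose central step is exactly the open problem, and you acknowledge as much, so it cannot be accepted as a proof. Beyond that, the specific structural statement you reduce to --- every toroidal graph critical for $5$-choosability contains a cyclic system of non-contractible triangles of spacing at most four --- is strictly stronger than what the paper's own reduction requires, and one of your intermediate claims is unjustified. A critical graph of edge-width at least four contains \emph{no} non-contractible triangle at all, so your structural statement silently asserts that no such critical graph exists; that is (essentially) Conjecture~\ref{conj-ew4}, which is open. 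Your stage-one bound on edge-width via locally planar / hyperbolic-family results only yields some constant $w_0$, not $w_0=3$, so the case of triangle-free non-contractible cycles is not dispatched. Relatedly, the claim that ``minimum degree $5$ together with the bound on the band's width forces a non-contractible triangle'' has no argument behind it and is not true in general: a cylindrical band of bounded width in a graph of minimum degree $5$ need not contain any triangle, let alone a non-contractible one.

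The second half of your stage two --- that consecutive laminar non-contractible triangles can always be chosen at distance at most four, because a wider triangle-poor subcylinder would admit a recoloring contradicting criticality --- is precisely Conjecture~\ref{con:tpp} (no critical prism-canvas of spacing at least five). The paper's evidence for it is the computational collapse of the counts ($510$, $2719$, $300$, $5$ candidates at spacings $1$--$4$), not a proof, and Postle's Theorem~\ref{thm:tpp} only gives an unspecified constant estimated around $100$. An appeal to ``Thomassen-style precoloring extension'' does not bridge that gap; closing it would require new reducibility or discharging arguments, as you yourself note at the end. So the correct assessment is that your proposal reproduces the paper's intended reduction strategy (Theorem~\ref{thm-main} plus the two structural conjectures) but proves neither of the two missing ingredients, and in one place (the edge-width-three claim) asks for more than is needed or known.
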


This is equivalent to the following claim.

\begin{conjecture}
A toroidal graph is $5$-choosable if and only if it is $5$-colorable.
\end{conjecture}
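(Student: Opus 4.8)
The two conjectures are equivalent, so it is enough to establish Conjecture~\ref{conj-equiv}; the equivalence is a routine consequence of the definitions. Every $5$-choosable graph is $5$-colorable (color from the constant lists $\{1,\dots,5\}$), and every subgraph of a toroidal graph is toroidal. Assuming Conjecture~\ref{conj-equiv}, if a toroidal graph $G$ were $5$-colorable but not $5$-choosable, a minimal non-$5$-choosable subgraph $H\subseteq G$ would be critical for $5$-choosability, hence $6$-critical, giving $\chi(G)\ge\chi(H)\ge 6$, a contradiction. Conversely, if every toroidal graph is $5$-choosable exactly when it is $5$-colorable, then a toroidal graph is critical for $5$-choosability precisely when it is not $5$-colorable while all of its proper subgraphs are, i.e.\ precisely when it is $6$-critical.

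So the task is to prove that a toroidal graph critical for $5$-choosability is $6$-critical. One direction is in fact already known: by Thomassen~\cite{thomassentorus} the only $6$-critical toroidal graphs are the four graphs in Theorem~\ref{thm-main}, each of which carries a cyclic system of non-contractible triangles of spacing at most four and hence, by that theorem, is critical for $5$-choosability. For the open direction, Theorem~\ref{thm-main} reduces everything to the following claim $(\star)$: every toroidal graph critical for $5$-choosability contains a cyclic system of non-contractible triangles of spacing at most four. My plan for $(\star)$ is this. Let $G$ be $L$-critical for a $5$-list assignment $L$ and drawn on the torus. Since planar graphs are $5$-choosable~\cite{thomassenplanargraphchoosable}, $G$ is non-planar and hence has a non-contractible cycle; since there are only finitely many toroidal graphs critical for $5$-choosability~\cite{postlethesis}, $G$ has bounded size, so in particular a shortest non-contractible cycle $C$ has bounded length. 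Cutting along $C$ yields a $C$-fragment $(F,f_1,f_2)$, and the interplay of the planarity of $F$ with the $L$-criticality of $G$ (minimum degree at least $5$, together with the finer local restrictions on graphs critical for $5$-list coloring) should force $C$ to be ``upgradable'' to a short non-contractible triangle homotopic to it. Sweeping such a triangle around the torus and re-applying the same bounded-width and criticality constraints to the cylindrical fragments between consecutive positions would then assemble a cyclic system $\CC$ of non-contractible triangles of spacing at most four, and Theorem~\ref{thm-main} would identify $G$ as one of Thomassen's four graphs, in particular as $6$-critical.

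The main obstacle is exactly this passage from ``a short non-contractible cycle'' to ``a cyclic system of short non-contractible triangles.'' A priori, a graph critical for $5$-choosability on the torus could have small edge-width yet contain no non-contractible triangle at all, or contain one whose $C$-fragment is a long ``empty tube'' with no intermediate non-contractible triangle to bound the spacing; excluding these possibilities is essentially the full content of Conjecture~\ref{conj-equiv}. Postle's finiteness theorem~\cite{postlethesis}, although it bounds the \emph{number} of critical graphs, does not supply an effective bound small enough to settle $(\star)$ by exhaustive search --- which is precisely why the enumeration behind Theorem~\ref{thm-main} needs the cyclic-system hypothesis. Closing the gap will, I expect, require a sharp structural description of graphs critical for $5$-list coloring that are drawn in a cylinder with prescribed short boundary cycles --- strong enough to extract a triangular non-contractible core of bounded width --- obtained by combining the linear-size and hyperbolicity results for critical graphs on surfaces with a careful local analysis around a shortest non-contractible cycle. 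This is the step at which the routine reductions end and the genuinely new work begins.
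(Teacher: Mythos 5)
You are right to notice that the statement in question is a \emph{conjecture}, and the paper does not prove it --- it only asserts, without proof, that it is equivalent to Conjecture~\ref{conj-equiv}. Your equivalence argument is correct and fills in exactly that unstated step: both directions use only that subgraphs of toroidal graphs are toroidal, that $5$-choosability implies $5$-colorability, and the standard minimal-counterexample translation between ``$5$-choosable iff $5$-colorable'' and ``critical for $5$-choosability iff $6$-critical.'' So on the part the paper actually commits to (the equivalence), your reasoning agrees with what the authors intend.

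Beyond that, you correctly diagnose where the genuine difficulty lies. Your claim $(\star)$ --- that every toroidal graph critical for $5$-choosability admits a cyclic system of non-contractible triangles of spacing at most four --- is precisely the hypothesis that Theorem~\ref{thm-main} cannot discharge, and the paper itself isolates the two missing ingredients as Conjectures~\ref{conj-ew4} and~\ref{con:tpp}: the first would handle the case of edge-width at least four (no non-contractible triangle at all, one of the two failure modes you name), and the second would bound the spacing between consecutive non-contractible triangles once one exists (the ``long empty tube'' failure mode). Your sketch of ``upgrading'' a short non-contractible cycle to a triangle and sweeping it around the torus is a reasonable heuristic for why those conjectures ought to hold, but as you yourself flag, it is not a proof: Postle's finiteness and hyperbolicity results give logarithmic-in-genus bounds on edge-width, which on the torus gives only a constant, not the specific constant four, and nothing in the current literature forces a triangle (rather than a longer short cycle) or bounds the inter-triangle spacing by four. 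So the ``proof'' is correct exactly as far as it goes, and the gap you identify is the gap the paper leaves open; there is no additional argument in the paper for you to have missed.
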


An important step towards this conjecture would be to prove it for graphs without non-contractible triangles.
The \emph{edge-width} of a graph drawn on a surface is the length of
the shortest non-contractible cycle.

\begin{conjecture}\label{conj-ew4}
Every graph drawn on the torus with edge-width at least four is 5-choosable.
\end{conjecture}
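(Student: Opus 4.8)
\begin{proofout}
Suppose Conjecture~\ref{conj-ew4} fails, and among all pairs $(G,L)$ such that $L$ is a $5$-list assignment and $G$ is an $L$-critical graph drawn on the torus with edge-width at least four, choose one with $|V(G)|$ as small as possible; we may then assume $|L(v)| = 5$ for every vertex $v$. By Postle~\cite{postlethesis} there are only finitely many minimal obstructions to $5$-choosability on the torus, so the task is to understand enough of their structure for the edge-width hypothesis to eliminate all of them.

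First I would record the local structure forced by $L$-criticality: $G$ has minimum degree at least five; the subgraph induced by the degree-five vertices is a Gallai forest (each block a complete graph or an odd cycle); and $G$ avoids a list of small reducible configurations --- separations of order at most four, short separating cycles enclosing few vertices, dense clusters of degree-five vertices bounded by a short cycle, and the like. The simplest of these are the standard Thomassen-style precoloring-extension arguments behind~\cite{thomassenplanargraphchoosable}; the more intricate ones I would certify by rerunning the computer-assisted enumeration of reducible cylindrical fragments of this paper, now in the regime where $G$ has no non-contractible triangle. The edge-width hypothesis is what makes these reductions topologically legitimate: since every triangle of $G$ bounds a disk and no short cycle is non-contractible, a configuration that is reducible inside a disk stays reducible in $G$.

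Next I would run a discharging argument on the torus itself. Give each vertex $v$ charge $\deg(v)-4$ and each face $f$ charge $\deg(f)-4$; by Euler's formula the total charge is zero. (One cannot first triangulate the large faces of $G$, since that might lower the edge-width, so the rules have to handle large faces directly --- hence faces also carry charge.) Using the absence of the reducible configurations, I would design rules moving charge from degree-five vertices and large faces to triangular faces and any other negatively charged objects so that every vertex and every face finishes with nonnegative charge; exhibiting one object that finishes with strictly positive charge then gives a contradiction and proves the conjecture. The reason one may hope for this at edge-width four but provably not at three is that the hypothesis prevents degree-five vertices from packing around a short non-contractible curve more densely than a planar discharging bound would allow --- exactly the kind of dense wrapping occurring in each of Thomassen's four $6$-critical toroidal graphs, all of which contain a non-contractible triangle, and which no planar-style rules can absorb.

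The step I expect to be the main obstacle is closing the discharging, equivalently showing that the reducible configurations are unavoidable in a graph drawn on the torus with minimum degree five and edge-width at least four. This is not a purely local statement: cutting $G$ along a shortest non-contractible cycle $C$ yields a planar cylindrical fragment with two boundary cycles isomorphic to $C$, and one has to $L$-color it so that the two boundaries receive identical colorings; a priori this cylinder can be both arbitrarily ``tall'' and locally wide, so neither the finite search behind Theorem~\ref{thm-main} (which relies on bounded spacing) nor a naive transfer-matrix over bounded cross-sections applies directly. Overcoming this seems to require a quantitative input --- a bound on the size of a minimal counterexample after cutting along $C$, or a periodicity of its structure along the cylinder --- letting one either splice out a smaller $L'$-critical graph, contradicting the choice of $(G,L)$, or reduce to a finite check finished by the extended enumeration. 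Establishing such a bound uniformly over the finitely many homotopy situations that arise, and arranging the reducible-configuration set so that the discharging actually closes, is where the real work lies.
\end{proofout}
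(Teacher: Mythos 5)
This statement is Conjecture~\ref{conj-ew4}, which the paper explicitly leaves \emph{open}; there is no proof in the paper to compare your attempt against. The paper identifies it, together with Conjecture~\ref{con:tpp}, as the two remaining ingredients that would jointly imply Conjecture~\ref{conj-equiv}, and the only supporting evidence cited is Postle's theorem~\cite{postle2018hyperbolic} that the statement holds if ``four'' is replaced by a sufficiently large (unspecified) constant. So your write-up is not, and could not be, a reconstruction of an argument in the paper.

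Judged as a blind attack on the conjecture, your sketch is a sensible programme --- minimal counterexample, local reducibility, Euler/discharging on the torus --- and to your credit you name its two load-bearing unresolved steps: assembling a reducible-configuration set rich enough to close the discharging, and controlling the cylindrical fragment obtained by cutting along a shortest non-contractible cycle, which can be simultaneously long and of large cross-section so that neither the paper's bounded-spacing enumeration behind Theorem~\ref{thm-main} nor a transfer matrix over bounded cuts applies. Those two steps \emph{are} the open mathematical content of the conjecture, so what you have is an outline, not a proof. Two smaller technical points: truncating every list to size exactly five need not preserve $L$-criticality of $G$, so you should instead pass to an $L'$-critical subgraph $G'$ (it still has edge-width at least four since non-contractible cycles of $G'$ are non-contractible in $G$, and it cannot be planar by Thomassen's theorem~\cite{thomassenplanargraphchoosable}); and the ``Gallai forest'' structure of the degree-five vertices is the right analogue to hope for from the degree-choosability characterization, but it should be stated and justified carefully in the list-coloring setting rather than imported from the ordinary $k$-critical case.
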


Let us remark that Postle~\cite{postle2018hyperbolic} proved that that the edge-width of a graph critical for $5$-list coloring
is at most logarithmic in its genus, and thus Conjecture~\ref{conj-ew4} is certainly true if we replace four by a larger constant.

Let us now describe more technical yet still interesting results that arise in the course of the proof
of Theorem~\ref{thm-main}.  For them, we need the standard concept of criticality relative to a subgraph.
Let $T$ be a subgraph of a graph $G$ and let $L$ be a list assignment for $G$. For an $L$-coloring $\psi$ of $T$,
we say that $\psi$ \emph{extends} to an $L$-coloring of $G$ if there exists an $L$-coloring $\phi$ of $G$
such that $\phi(v) = \psi(v)$ for all $v \in V(T)$.  The graph $G$ is \emph{$T$-critical with respect to $L$}
if $G \neq T$ and for every proper subgraph $G' \subset G$ such that $T \subseteq G'$, there exists an $L$-coloring of
$T$ that extends to an $L$-coloring of $G'$, but does not extend to an $L$-coloring of $G$.
That is, the removal of any vertex or edge of $G$ changes the set of precolorings of $T$ that extend to the rest of the graph.
If the fixed list assignment $L$ is clear from context, then we just say that $G$ is \emph{$T$-critical}.
Note that if $T$ is the null graph, then $G$ is $T$-critical with respect to $L$ if and only if $G$ is $L$-critical.

\subsection{Critical Cycle-Canvases}

Suppose that $F$ is a graph drawn in the closed disk $\Delta$, $C$ is a cycle in $F$ tracing the boundary of the disk,
and $L$ is a list assignment such that $|L(v)| \geq 5$ for every $v\in V(F)\setminus V(C)$.
Then we say that $(F, C, L)$ is a \emph{cycle-canvas}.  The cycle-canvas is \emph{chordless} if $C$ is an induced cycle in $F$.
The \emph{circumference} of $F$ is the length of the cycle $C$.
We say that a cycle-canvas $(F, C, L)$ is \emph{critical} if $F$ is $C$-critical with respect to $L$.

Let $G$ be a graph drawn on a surface,
let $H$ be a subgraph of $G$, and let $f$ be a face of $H$ homeomorphic to an open disk.
Cut the surface along the boundary of $f$ (duplicating the vertices and edges of the boundary walk of $f$ as needed), and let $G'$ be the graph obtained from $G$ in this way.  Note that $G'$ is drawn partly in the original surface with a hole and partly in the closed disk $\Delta$ corresponding to $f$.  Let $F$ be the subgraph of $G'$ drawn in $\Delta$.
Note that the boundary walk of $f$ corresponds to the cycle $C$ in $F$ drawn along the boundary of $\Delta$.
There is a natural projection $\pi$ from $\Delta$ to the closure of $f$ mapping $F$ to the subgraph of $G$ drawn in the closure of $f$, injective everywhere except possibly on $C$.
For any list assignment $L$ for $G$, we define $L_F$ to be the list assignment for $F$ such that $L_F(v)=L(\pi(v))$ for every $v\in V(F)$.
Clearly, if $L$ is a $5$-list assignment, then $G_{f,L}=(F,C,L_F)$ is a cycle-canvas.

The importance of cycle-canvases comes from the following standard result.
\begin{lemma}\label{lemma-cyclecan}
Let $G$ be a graph drawn on a surface, let $T$ and $H$ be subgraphs of $G$, and let $f$ be a face of $H$ homeomorphic to an open disk
such that $T$ is disjoint from $f$.  Let $L$ be a 5-list assignment for $G$.  If $G$ is $T$-critical with respect to $L$
and $f$ is not a face of $G$, then the cycle-canvas $G_{f,L}=(F,C,L_F)$ is critical.
\end{lemma}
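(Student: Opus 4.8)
The plan is to prove the contrapositive-flavored statement directly: assuming $G$ is $T$-critical with respect to $L$ and $f$ is not a face of $G$, show that $F$ is $C$-critical with respect to $L_F$. First I would verify that $F \neq C$. Since $f$ is a face of $H$ but not a face of $G$, there must be at least one vertex or edge of $G$ drawn in the open disk $f$; this vertex or edge lies in $F$ but not on $C$, so $F$ strictly contains $C$. Next, the main work is to show that deleting any vertex or edge $x$ of $F$ not on $C$ changes the set of $L_F$-colorings of $C$ that extend to $F$. Since $\pi$ is injective off $C$, such an $x$ corresponds to a vertex or edge $\pi(x)$ of $G$ lying in the open disk $f$, hence disjoint from $T$ (as $T$ is disjoint from $f$) and not in $H$.

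For the key step, I would relate colorings of $F$ to colorings of the part of $G$ lying in the closure of $f$, and then to colorings of all of $G$ extending a precoloring of $T$. Concretely: $G$ being $T$-critical means that deleting $\pi(x)$ from $G$ yields some $L$-coloring $\psi$ of $T$ that extends to $G - \pi(x)$ but not to $G$. I would then argue that, because the closure of $f$ meets the rest of $G$ only along the boundary cycle $C$ (which maps to the boundary walk of $f$ in $H$), an $L$-coloring of $G - \pi(x)$ restricts to an $L_F$-coloring of $C$ that extends to $F - x$; and conversely, any $L_F$-coloring of $C$ that extends to $F$ can be glued with the coloring of the exterior part of $G-\pi(x)$ (which agrees on $C$ via $\psi$) to give an $L$-coloring of $G$ — contradicting that $\psi$ does not extend to $G$. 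This forces the set of precolorings of $C$ extending to $F$ to be strictly smaller than the set extending to $F - x$, i.e. removing $x$ changes this set. One has to be a little careful about the identification on $C$ when $\pi$ is not injective on $C$ (i.e. when the boundary walk of $f$ repeats vertices): the precoloring of $C$ must be chosen consistently with this identification, which is automatic since it comes from restricting a coloring of $G$.

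The main obstacle I expect is handling the gluing rigorously when $C$ is not an induced cycle or when the boundary walk of $f$ is not a simple cycle — ensuring that an $L_F$-coloring of $F$ and the exterior coloring of $G - \pi(x)$ can genuinely be combined into a proper $L$-coloring of $G$. This requires checking that the only edges of $G$ with one endpoint inside $f$ and one outside are edges of $C$ (true since $f$ is a face of $H$ and we cut along its boundary), so no monochromatic edge is created across the seam. Given that, the argument is a routine diagram-chase, and I would present it as such after pinning down the correspondence between colorings of $G$, colorings of the exterior, and colorings of $F$ via $\pi$.
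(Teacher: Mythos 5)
Your outline is correct and follows essentially the same argument as the paper: pull back a coloring of a proper subgraph of $G$ (obtained via $T$-criticality) to a precoloring of $C$ extending into the corresponding proper subgraph of $F$, then show it cannot extend to all of $F$ by gluing any such extension with the exterior coloring to contradict non-extendability to $G$. The only cosmetic difference is that you phrase $C$-criticality via single vertex/edge deletions while the paper works directly with an arbitrary proper subgraph $Z \supseteq C$ of $F$ (and forms $Z_0 = \pi(Z) \cup (G\text{ outside } f)$); the two formulations are equivalent, and your explicit check that $F \neq C$ (needed by the definition, and guaranteed by the hypothesis that $f$ is not a face of $G$) is a point the paper leaves implicit.
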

\begin{proof}
Let $\pi$ be the natural projection from the disk containing the cycle-canvas $G_{f,L}$ to the closure of $f$ as described
above.  Consider any proper subgraph $Z$ of $F$ containing $C$, and let $Z_0$ be the subgraph of $G$ consisting of $\pi(Z)$
and of the vertices and edges of $G$ not drawn in $f$. Note that $Z_0$ is a proper subgraph of $G$ and that $Z_0$ contains $T$,
since $T$ is disjoint from $f$.  Since $G$ is $T$-critical with respect to $L$, there exists
an $L$-coloring $\psi_0$ of $T$ that extends to an $L$-coloring $\varphi_0$ of $Z_0$, but not to an $L$-coloring of $G$.
Let $\varphi$ be defined by letting $\varphi(v)=\varphi_0(\pi(v))$ for every $v\in V(Z)$, and let $\psi$ be the
restriction of $\varphi$ to $C$.  Observe that $\psi$ is an $L_F$-coloring of $C$ that extends to the $L_F$-coloring $\varphi$ of $Z$.
Moreover, $\psi$ cannot extend to an $L_F$-coloring $\varphi'$ of $F$: Otherwise, let $\varphi'_0(u)=\varphi_0(u)$ for every vertex $u$
of $G$ drawn outside of $f$, and $\varphi'_0(u)=\varphi'(\pi^{-1}(u))$ for every vertex $u$ of $G$ drawn in $f$, and observe
that $\varphi'_0$ is an $L$-coloring of $G$ extending $\psi_0$, which is a contradiction.

Thus, for any proper subgraph $Z$ of $F$ containing $C$, there exists an $L_f$-coloring $\psi$ of $C$ that extends to the $L_F$-coloring $\varphi$
of $Z$, but not of $F$.  We conclude that $F$ is $C$-critical with respect to $L_f$, as required.
\end{proof}

Hence, critical cycle-canvases naturally arise in constructions of critical graphs: When we argue or assume that a graph $G$
critical for 5-list-coloring has a subgraph $H$ with all faces homeomorphic to disks, then $G$ can be obtained from $H$ by pasting
critical cycle-canvases
into faces of $H$.  A fundamental step towards Postle's bounds on graphs critical for 5-list coloring is the following
bound on critical cycle-canvases~\cite{fivelistcoloring2}:

\begin{theorem}\label{thm:linearbound}
If $(G, C, L)$ is a critical cycle-canvas, then $|V(G)| \leq 19 |V(C)|$.
\end{theorem}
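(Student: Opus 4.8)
The plan is to prove the linear bound on critical cycle-canvases by a discharging argument, combined with a structural analysis of low-complexity ``pieces'' that can appear near the boundary cycle $C$. First I would recall the standard reductions: we may assume the cycle-canvas is chordless (chords can be handled by splitting $C$ into two shorter cycles and inducting, since a chord splits $F$ into two smaller cycle-canvases whose boundary lengths sum to $|V(C)|+2$), and we may assume $F$ is $2$-connected and that $C$ has no vertex of degree $2$ in $F$ other than through $C$ itself. The key local fact, which Thomassen-style arguments give us, is that in a critical cycle-canvas every internal vertex has degree at least $5$ in $F$, and more refined: one cannot have too many internal vertices of degree exactly $5$ clustered near a short segment of $C$, because $C$-criticality forbids the configurations that Thomassen uses to prove $5$-choosability of planar graphs (a separating triangle with an internal vertex, a vertex of degree $4$ or $5$ with a prescribed list structure, etc.).

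The heart of the argument is a discharging scheme on the planar graph $F$ (after triangulating the interior so Euler's formula is clean, tracking the effect on $C$). Assign charge $\deg(v)-6$ to each vertex and $2\ell(\ell)-6$ to each face $\ell$ (or the analogous charge depending on whether one triangulates); by Euler's formula the total is a fixed negative constant proportional to $-|V(C)|$ once boundary effects are accounted for. The discharging rules move charge from the boundary cycle $C$ (whose vertices, having no list-size constraint, can absorb or donate charge) toward internal vertices, so that after discharging every internal vertex has nonnegative charge. Establishing that the rules achieve this requires the reducibility lemmas: if some internal vertex or short internal path would end with negative charge, one exhibits an explicit small subgraph whose removal (or whose replacement) does not change the set of extendable precolorings of $C$, contradicting criticality. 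Summing charges then yields $\sum_{v\text{ internal}} 0 \le \text{(total charge)} = -c|V(C)| + (\text{charge on }C)$, and since the charge that can accumulate on $C$ is $O(|V(C)|)$, one gets $|V(G)|-|V(C)| = O(|V(C)|)$, i.e. $|V(G)| \le 19|V(C)|$ after optimizing the constants.

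The main obstacle, and where essentially all the work lies, is proving the reducibility statements with strong enough quantitative control to make the constant come out to $19$ rather than something larger: one must rule out not just single bad vertices but entire ``strings'' of degree-$5$ internal vertices and degree-$3$ faces running alongside $C$, and the list-coloring setting (as opposed to ordinary $5$-coloring) makes the reducible-configuration analysis genuinely harder because one must argue about extendability of \emph{every} precoloring of $C$ rather than a fixed coloring, so Thomassen's ``color the outer path greedily'' trick has to be adapted carefully (typically by choosing which boundary vertex to ``uncolor'' or by a clever ordering of the internal vertices). I would expect to isolate a small finite list of forbidden near-boundary configurations, verify each is reducible by a short hand argument (or, in the spirit of this paper, by computer enumeration of critical cycle-canvases of bounded circumference), and then feed exactly those into the discharging rules. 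The bookkeeping of boundary charge — precisely how much charge vertices of $C$ of each degree may send inward — is the delicate step that pins down the final constant.
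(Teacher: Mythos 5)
This theorem is not proved in the paper at all: it is stated as a known result and cited from reference~\cite{fivelistcoloring2} (Postle and Thomas, \emph{Five-list-coloring graphs on surfaces II: A linear bound for critical graphs in a disk}). So there is no ``paper's own proof'' to match; the relevant comparison is with the proof in that reference, which the present paper in fact relies on structurally via the Cycle Chord or Tripod Theorem (Theorem~\ref{thm:cyclechordtripod} here, cited from the same source).

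Your proposal is a discharging argument, and that is a genuinely different route from the one actually used. The Postle--Thomas proof is an induction driven by the chord-or-tripod decomposition: if $C$ has a chord, split into two cycle-canvases of smaller total boundary; otherwise, delete the tripod vertex $v$ (and the contents of the at-most-one nonempty face of $G[\{v\}\cup V(C)]$, handled by Lemma~\ref{lemma-cyclecan}) and recurse on a smaller critical cycle-canvas. The quantitative control comes from a carefully designed potential/deficiency function that is shown to be monotone under these two operations, and the constant $19$ falls out of that bookkeeping. There is no charge assignment, no Euler-formula balance, and no ``near-boundary reducible configuration'' catalogue in the sense you describe. You do recover one piece of the actual argument --- the reduction to the chordless case by splitting along a chord --- but the core of their induction (the tripod step and the potential function) plays no role in your sketch.

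As a proof attempt in its own right, the proposal has a real gap: everything that makes the theorem true is deferred. You explicitly acknowledge that ``essentially all the work lies'' in the reducibility lemmas and the boundary bookkeeping, and none of that is carried out. Two specific problems: (a) the assertion that Euler's formula gives a total charge ``proportional to $-|V(C)|$'' is not automatic --- the raw total is a fixed constant (e.g. $-12$), and to make it scale with $|V(C)|$ you must already have set up a boundary-sensitive charging that effectively \emph{is} the hard part of the proof; and (b) there is no reason visible in your argument why the final constant would be $19$ rather than, say, $100$ --- the constant $19$ is a property of the specific Postle--Thomas potential, and a different method would almost surely land elsewhere. So while a discharging proof of \emph{some} linear bound is plausible, what you have written is a plan, not a proof, and it is not the proof the paper depends on.
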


This bound means that up to isomorphism, there are only finitely 
many cycle-canvases of any fixed circumference. The upper bound from Theorem~\ref{thm:linearbound}
is of course far from being tight.  As our first step towards Theorem~\ref{thm-main},
we enumerated all chordless critical cycle-canvases of circumference at most $14$;
Table~\ref{tab:cyclecanvases} summarizes the numbers of the cycle-canvases (up to isomorphism)
that we obtained.

\begin{table}[h]

\centering
\begin{tabular}{l | l | l || l | l | l || l | l | l}
$\ell$ & \# & $\max |V|$ & $\ell$ & \# & $\max |V|$ & $\ell$ & \# & $\max |V|$\\
\hline
3 & 0 & -- & 7  & 17    & 11  & 11 & 131220   & 21 \\ 
4 & 0 & -- & 8  & 144   & 14  & 12 & 1447448  & 24 \\
5 & 1 & 6 & 9  & 1259  & 16  & 13 & 16506283 & 26 \\ 
6 & 4 & 9 & 10 & 12517 & 19  & 14 & 193535378       & 29  
\end{tabular}
\caption{Number and size of chordless critical cycle-canvas candidates of circumference $\ell$.}
\label{tab:cyclecanvases}
\end{table}

Table~\ref{tab:cyclecanvases} gives upper bounds on the actual 
numbers, since our program generates a set of \emph{candidates} for 
critical cycle-canvases, that is, it is guaranteed that our lists include
all critical chordless cycle-canvases of the given circumference,
but may also include some non-critical ones.  The enumeration took 113 hours on an Intel Core i7-1195G7 processor.

\subsection{Critical Prism-Canvases}\label{ssec-prism}

Suppose that $(F,f_1,f_2)$ is a cylindrical fragment, the faces $f_1$ and $f_2$ are bounded by triangles $T_1$ and $T_2$,
and $L$ is a list assignment for $F$ such that $|L(v)|\ge 5$ for every $v\in V(F)\setminus V(T_1\cup T_2)$.
Then we say that $(F, T_1, T_2, L)$ is a \emph{prism-canvas}.
We say that a prism-canvas $(F, T_1, T_2, L)$ is \emph{critical} 
if $F$ is $(T_1 \cup T_2)$-critical with respect to $L$.
The next step Postle took on the way to his bounds on critical graphs for 5-list coloring is the following
result on prism-canvases~\cite{postlethesis}.
\begin{theorem}\label{thm:tpp}
There exists an integer $D$ such that every critical prism-canvas has spacing less than $D$.
That is, for every prism-canvas $(F, T_1, T_2, L)$ of spacing at least $D$, every $L$-coloring of $T_1 \cup T_2$
extends to an $L$-coloring of $F$. 
\end{theorem}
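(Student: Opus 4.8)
The plan is to establish the equivalent statement that there is a constant $D$ for which no critical prism-canvas has spacing at least $D$; the two formulations in the theorem are readily seen to be equivalent (modulo the reductions below), since if some $L$-coloring of $T_1\cup T_2$ failed to extend, an edge/vertex-minimal obstruction to extending it would be a critical prism-canvas of spacing at least $D$. I would begin with two routine normalizations. First, given a prism-canvas $(F,T_1,T_2,L)$ of large spacing and an $L$-coloring $\psi$ of $T_1\cup T_2$ that we wish to extend, replace $L(v)$ for each $v\in V(T_1\cup T_2)$ by $\{\psi(v)\}$ together with four colors appearing in no list of $F$; this changes neither whether $\psi$ extends nor the drawing, and makes $L$ a genuine $5$-list assignment on all of $V(F)$. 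Second, pass to a subgraph $F^\ast\supseteq T_1\cup T_2$ minimal subject to $\psi$ not extending; then $F^\ast$ is $(T_1\cup T_2)$-critical, and, because deleting edges and vertices only increases distances, its spacing is at least that of $F$ provided $T_1$ and $T_2$ still lie in one component and $F^\ast$ still separates the two boundary faces of the cylinder. The excluded cases (a non-separating or split $F^\ast$) put one of the two sides inside a disk bounded by a triangle, so Theorem~\ref{thm:linearbound} bounds its size directly; I would dispose of these first. Henceforth $F$ is connected, $(T_1\cup T_2)$-critical, drawn in the cylinder separating $f_1$ from $f_2$, carries a $5$-list assignment, and has spacing $s$.

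The second step is to show that there is an absolute constant $\ell_0$ such that $F$ contains an induced non-contractible cycle, disjoint from $T_1\cup T_2$ and separating $f_1$ from $f_2$, of length at most $\ell_0$, and consequently---by iterating, when $s$ is large---a nested family $D_1,\dots,D_k$ of pairwise disjoint such cycles with $k\to\infty$ as $s\to\infty$. A shortest separating non-contractible cycle $C^\ast$ is automatically induced (a chord would split it into a shorter separating non-contractible cycle). To bound $|V(C^\ast)|$, cut the cylinder along $C^\ast$, cap the $T_1$-side with a bounded-size planar gadget on $5$-lists that forces the coloring of $T_1$, and observe that the resulting graph, drawn in a disk with boundary cycle $C^\ast$, is a critical cycle-canvas; Theorem~\ref{thm:linearbound} then bounds its size, hence $|V(C^\ast)|$ and the distance from $C^\ast$ to $T_1$, which is impossible if $C^\ast$ is long. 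Thomassen's theorem that a precolored triangular face of a planar graph with $5$-lists extends is used both to construct these forcing gadgets and to handle separating cycles running within $O(1)$ of $T_1$ or $T_2$. Arranging the capping so that criticality genuinely transfers to the disk-canvas is one delicate point.

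Given the nested family $D_1,\dots,D_k$ of short disjoint separating cycles, for $i<j$ let $Q_{i,j}$ be the annular part of $F$ between $D_i$ and $D_j$, and let $R_{i,j}$ record the pairs $(\sigma,\sigma')$ of $L$-colorings of $D_i,D_j$ that extend together over $Q_{i,j}$; these compose, $R_{i,\ell}=R_{j,\ell}\circ R_{i,j}$. After the standard reduction bounding all list sizes, each $D_i$ has one of finitely many types (a cycle of length at most $\ell_0$ with a list assignment, up to rotation, reflection, and relabeling of colors), and the data attached to each cycle---its type, the colorings realized on it from the $T_1$-side, and the colorings realized from the $T_2$-side---take finitely many values. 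Choosing $k$, and hence $D$, larger than this number, a pigeonhole/idempotent argument of pumping-lemma flavor should locate a section $Q_{a,b}$ that is \emph{redundant}: deleting its interior, with $D_a$ and $D_b$ re-identified through a type-isomorphism, changes neither which precolorings of $T_1\cup T_2$ extend, contradicting the criticality of $F$.

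I expect this last step to be the main obstacle. The difficulty is to choose a finitary invariant of an annular section strong enough that a repetition lets one \emph{collapse} the section between the repeated cycles while preserving the full precoloring-extension relation on \emph{both} triangular ends simultaneously, and to extract from this a genuine violation of criticality of $F$ rather than merely a smaller non-extendable example. This two-sided bookkeeping is where the hypothesis that both ends are triangles---so that the number of relevant boundary colorings stays bounded---is essential; for cylinder-canvases with arbitrary boundary cycles no such bound on the spacing holds. A secondary difficulty, already flagged, is ensuring that the disk-canvas obtained in the second step by gluing in a color-forcing gadget really inherits criticality, which requires care with the precolorings witnessing criticality and with chords and near-boundary separating cycles.
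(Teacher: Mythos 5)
The paper does not prove Theorem~\ref{thm:tpp}; it is Postle's Two Precolored Triangles Theorem, cited directly from \cite{postlethesis}. So there is no ``paper proof'' to compare against, and your proposal has to be judged on its own merits. It does lay out a sensible high-level plan (short nested separating cycles, then a pigeonhole/pumping collapse), but the crucial intermediate step---bounding the length of a shortest separating cycle disjoint from $T_1\cup T_2$---does not go through as written.

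The problem is with the ``cap the $T_1$-side with a bounded-size planar gadget on 5-lists that forces the coloring of $T_1$'' step. After cutting along $C^\ast$, the piece containing $T_1$ is an annulus with boundary circles $T_1$ and $C^\ast$; to make it a cycle-canvas with boundary $C^\ast$ you must fill the $T_1$ face, and every vertex strictly inside the disk, including $V(T_1)$ and all of the gadget, must then carry a genuine 5-list. But Thomassen's precolored-triangle theorem is exactly the statement that \emph{every} proper coloring of a triangular face of a planar graph with 5-lists extends to the whole graph, so no such gadget can eliminate a single precoloring of $T_1$. The filled-in piece therefore imposes no constraint whatsoever through $T_1$, and it is not a critical cycle-canvas in any useful sense. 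Moreover, even if one granted that the piece were a critical cycle-canvas, Theorem~\ref{thm:linearbound} gives $|V(\cdot)|\le 19\,|V(C^\ast)|$---an \emph{upper} bound that grows with $|V(C^\ast)|$, which cannot by itself bound $|V(C^\ast)|$ or yield a contradiction when $C^\ast$ is long. What you actually need here is a linear isoperimetric statement for cylinders (or, equivalently, that many short separating cycles must appear in a long critical prism-canvas), which is a substantial lemma in its own right and is not a formal consequence of the cycle-canvas bound. Finally, the pumping step you flag as the ``main obstacle'' is indeed delicate: one must choose a finitary invariant closed under composition of the annular relations and verify that identifying two repeated cycles produces a \emph{proper subgraph} violating $(T_1\cup T_2)$-criticality, not merely a smaller counterexample. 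Your honesty about these two soft spots is appropriate---they are precisely where the real work lives.
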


For a prism-canvas $(F,T_1,T_2,L)$, let $P$ be a shortest path between $T_1$ and $T_2$ in $F$.  A \emph{skeleton}
of $F$ is the subgraph $H$ of $F$ induced by $V(T_1\cup T_2\cup P)$.  Note that if the distance between $T_1$ and $T_2$
is at least two, then $H$ consists of $T_1\cup T_2\cup P$ and possibly some additional edges between $T_1$ and the
second vertex of $P$ and between $T_2$ and the next to last vertex of $P$; and if the distance is at most one, then
$H$ consist of $T_1$, $T_2$, and edges between them.

By Lemma~\ref{lemma-cyclecan}, every critical prism-canvas can be obtained from its skeleton by pasting
chordless critical cycle-canvases into its faces.  Moreover, if the prism-canvas has spacing $d$, then
the pasted cycle-canvases have circumference at most $2d+6$.  Hence, using our list of chordless critical cycle-canvas candidates,
we can obtain all candidates for critical prism-canvases of spacing at most four.
We performed this procedure and eliminated the prism-canvases that we were able to show to be non-critical from the list.
In this way, we obtained $510$, $2719$, $300$ and $5$ candidates for critical prism-canvases of spacing
$1$, $2$, $3$ and $4$, respectively.

\begin{figure}
\centering
\includegraphics{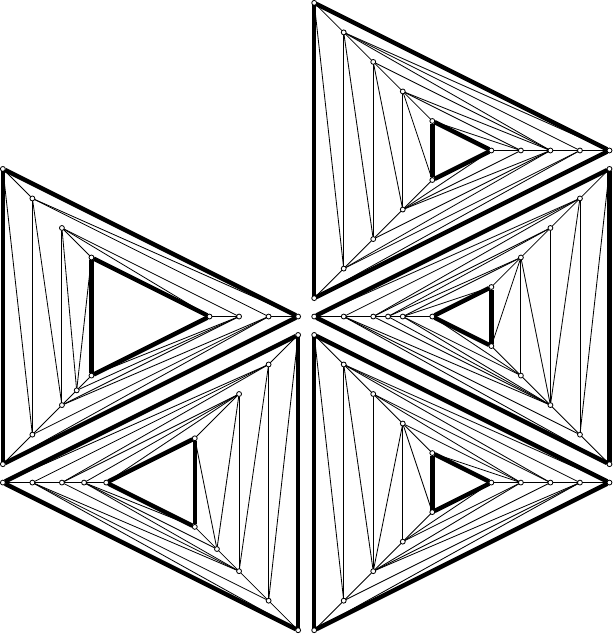}
\caption{Candidates for critical prism-canvases of spacing 4.}\label{fig-canvases}
\end{figure}

The value of $D$ for which Postle proves Theorem~\ref{thm:tpp} is not made 
explicit, but it can be estimated to be around $100$. The rapid decrease of the number of critical prism-canvases suggests
that $D=5$ may be correct.
\begin{conjecture}\label{con:tpp}
There is no critical prism-canvas of spacing at least five.
\end{conjecture}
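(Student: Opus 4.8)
The plan is to follow the same computer-assisted strategy that proves Theorem~\ref{thm-main}, pushed one level further. By Lemma~\ref{lemma-cyclecan} (applied with $H$ the skeleton, which is an induced subgraph on at most $d+5$ vertices and has only finitely many types for each fixed $d$), every critical prism-canvas of spacing $d$ arises from its skeleton by pasting chordless critical cycle-canvases of circumference at most $2d+6$ into the faces of the skeleton. For $d=5$ this needs all chordless critical cycle-canvases of circumference at most $16$, so the first step is to extend the enumeration of Table~\ref{tab:cyclecanvases} from circumference $14$ to circumference $16$. Extrapolating the growth in that table — each extra unit of circumference multiplies the count by roughly twelve — circumference $16$ involves on the order of $10^{10}$ candidates, a heavy but not obviously infeasible computation; it might be cut down substantially by noting that the circumference-$15$ and $16$ cycle-canvases that actually occur in a spacing-$5$ skeleton have a restricted, ``pinched'' shape (the internal vertices of the shortest path appear twice on the boundary cycle, with equal lists), so one could enumerate the spacing-$5$ prism-canvas candidates directly instead of going through the full list of large cycle-canvases.

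With that list in hand, the second step mirrors Section~\ref{ssec-prism}: assemble every candidate for a critical prism-canvas of spacing exactly $5$ by pasting the enumerated cycle-canvases into the skeletons whose shortest $T_1$--$T_2$ path has length $5$, and for each candidate verify that it is \emph{not} critical, i.e.\ that every $L$-coloring of $T_1\cup T_2$ extends to the whole canvas. Given that the number of critical prism-canvas candidates collapses from spacing $2$ onward ($2719$, then $300$, then $5$), one expects either no candidates at all at spacing $5$ or a handful that are directly checked to be non-critical; either way this settles the conjecture for spacing exactly $5$ (and, by pushing the same computation to circumference $18,20,\dots$, for spacing $6,7,\dots$ as far as is feasible).

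The remaining and, I expect, decisive step is to cover all spacings above whatever threshold the computation reaches. This does \emph{not} reduce to the finite cases by a monotonicity argument of the sort available for cycle-canvases: one cannot split a long cylinder into shorter prism-canvases, since a short non-contractible curve across the middle of the cylinder need not be a triangle. Instead one would argue as in Postle's proof of Theorem~\ref{thm:tpp} — in a critical prism-canvas of spacing $s$, take a shortest $T_1$--$T_2$ path $v_0v_1\cdots v_s$; for large $s$ the vertices near $v_{\lfloor s/2\rfloor}$ lie far from both boundary triangles, and a potential/discharging argument, using the linear bound of Theorem~\ref{thm:linearbound} to control the cycle-canvases pasted into the skeleton faces, shows that this middle band can be colored no matter how $T_1$ and $T_2$ are colored, contradicting criticality. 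The difficulty is purely quantitative: Postle's version of the argument only gives the threshold $D\approx 100$, and bringing it down to within reach of the computation (ideally all the way to $5$) seems to require feeding the explicit small-case data from the first two steps back into a much more careful discharging scheme. Closing this gap is the crux of the conjecture, and it may well need a genuinely new idea rather than more computation.
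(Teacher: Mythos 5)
This statement is a \emph{conjecture} in the paper, not a theorem; the paper offers no proof of it, and in fact explicitly singles it out (together with Conjecture~\ref{conj-ew4}) as exactly what remains to establish Conjecture~\ref{conj-equiv}. What the paper does prove is the strictly weaker Theorem~\ref{thm:prismcanvasresults}, whose additional hypothesis --- the existence of a linear system of separating triangles of spacing at most four --- is precisely the device that makes the infinite family of spacings collapse to a finite check, by letting one cut the cylinder along triangles into subcanvases that can be enumerated.

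Your proposal reflects this accurately. The first two steps (extending the chordless critical cycle-canvas enumeration to circumference~$16$ and then assembling and testing spacing-$5$ prism-canvas candidates) are exactly the paper's methodology pushed one level further, and your supporting arithmetic is correct: a skeleton of a spacing-$d$ prism-canvas has at most $d+5$ vertices, and the cycle-canvases pasted into its faces have circumference at most $2d+6$, matching the paper's claims. Crucially, you also correctly identify the obstruction: unlike cycle-canvases, where Theorem~\ref{thm:cyclechordtripod} gives a reduction to bounded circumference, there is no way to split a general long prism-canvas into shorter ones, since the cylinder need not contain a separating triangle in its middle --- this is exactly why Theorem~\ref{thm:prismcanvasresults} carries its extra hypothesis, and why the paper records the full claim only as a conjecture.

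So the honest verdict is that you have reproduced the paper's own research program rather than a proof, and you say so yourself: the decisive third step (bringing Postle's threshold $D\approx 100$ for Theorem~\ref{thm:tpp} down to~$5$, or finding some other argument covering all large spacings) is left as ``the crux of the conjecture'' that ``may well need a genuinely new idea.'' That gap is real, the paper leaves it open, and no amount of the finite computation you propose can close it on its own. The one thing I would flag is that the proposal's framing (``The plan is to follow the same computer-assisted strategy that proves Theorem~\ref{thm-main}'') reads as if it might yield a proof; it cannot, for the reason you yourself give at the end, and it would be cleaner to state up front that the statement is being attacked, not proved.
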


To better test this conjecture, we pushed the enumeration a bit further.
Consider a prism-canvas $Q=(F,T_1,T_2,L)$, and let $C$ be a triangle in $Q$ distinct from $T_1$ and $T_2$
separating the boundary faces.  For $i\in\{1,2\}$, let $Q_i=(F_i,T_1,C,L)$ be the prism-canvas with $F_i$
being the subgraph of $F$ drawn between $T_1$ and $C$.
Analogously to Lemma~\ref{lemma-cyclecan}, it is easy to see that if $Q$ is critical,
then the prism-canvases $Q_1$ and $Q_2$ are also critical.  Hence, every critical prism-canvas with a triangle separating its boundary face
can be obtained by gluing smaller critical prism-canvases.  We tried gluing together the prism-canvases from the list
we obtained above, and after eliminating the provably non-critical ones, we found no new prism-canvases.
This gives us a slight strengthening of Theorem~\ref{thm:tpp}; let us give the definition necessary to state it.

A \emph{linear system of separating triangles} in a prism-canvas $Q=(F,T_1,T_2,L)$
is a sequence $T_1=C_0, C_1, \ldots, C_m=T_2$ of distinct triangles in $F$ separating the boundary faces,
such that for $i\in\{1,\ldots,m-1\}$, the triangle $C_i$ separates $C_{i-1}$ from $T_2$.
For $i\in\{1,\ldots,m\}$, let $Q_i=(F_i,C_{i-1},C_i,L)$ be the prism-canvas with $F_i$ consisting of the subgraph of $F$ between
$C_{i-1}$ and $C_i$.  The \emph{spacing} of the system is the maximum of the spacings of the prism-canvases $Q_1$, \ldots, $Q_m$.

\begin{theorem}\label{thm:prismcanvasresults}
Let $Q$ be a prism-canvas containing a linear system of separating triangles of spacing at most four.  If the prism-canvas $Q$ is critical, then $Q$
has spacing at most four.
\end{theorem}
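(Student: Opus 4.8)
The plan is to propagate the two computational facts of Section~\ref{ssec-prism} along the linear system — the list of critical prism-canvas candidates of spacing at most four, and the fact that gluing two such candidates never produces, after discarding the provably non-critical ones, a prism-canvas outside that list — by induction on the number $m$ of segments of the system. The ingredient that drives the induction is the splitting principle for critical prism-canvases: if a prism-canvas is critical and $C$ is a triangle separating its two boundary faces, then each of the two prism-canvases into which $C$ cuts it is critical. This is the prism-canvas analogue of Lemma~\ref{lemma-cyclecan} already invoked in Section~\ref{ssec-prism}; it is proved the same way, by taking a proper subgraph $Z$ of one half that contains the boundary triangles of that half, adjoining all of the other half to get a proper subgraph of the whole that contains both original boundary triangles, lifting the precoloring of those triangles that extends to this subgraph but not to the whole, and restricting it to the boundary triangles of the half. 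Applying this to the internal triangles $C_1,\dots,C_{m-1}$ of the given linear system shows that every segment $Q_i=(F_i,C_{i-1},C_i,L)$ is critical; since each $Q_i$ has spacing at most four, the enumeration of Section~\ref{ssec-prism} tells us $Q_i$ is isomorphic to one of the listed prism-canvas candidates, every one of which has spacing at most four.

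Now I would induct on $m$. For $m=1$ we have $Q=Q_1$, which has spacing at most four. For $m\ge 2$, cut $Q$ along $C_1$: this yields the first segment $Q_1$ (drawn between $T_1=C_0$ and $C_1$) and a prism-canvas $Q'=(F',C_1,T_2,L)$ drawn between $C_1$ and $T_2=C_m$, both critical by the splitting principle. The triangles $C_1,\dots,C_m$ form a linear system of separating triangles in $Q'$ whose $m-1$ segments are exactly $Q_2,\dots,Q_m$, each of spacing at most four, so by the induction hypothesis $Q'$ has spacing at most four and is therefore isomorphic to a listed candidate. Hence $Q$ arises by gluing the two listed candidates $Q_1$ and $Q'$ along a common boundary triangle with compatible lists on $C_1$ (the restriction of $L$, the same on either side) — exactly one of the gluings considered in Section~\ref{ssec-prism}. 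Since $Q$ is critical, it is not among the gluings discarded there as provably non-critical, so $Q$ is isomorphic to a listed candidate and therefore has spacing at most four.

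The bookkeeping above is routine; the weight of the proof lies in the correctness and exhaustiveness of the two underlying computations — that the prism-canvas list really contains every critical prism-canvas of spacing at most four (which rests on Lemma~\ref{lemma-cyclecan}, on the bound $2d+6$ for the circumference of the cycle-canvases pasted into a skeleton of spacing $d$, and on the completeness of the cycle-canvas enumeration in Table~\ref{tab:cyclecanvases}), and that the gluing search really ranged over all identifications of a boundary triangle of one candidate with one of the other and all list assignments agreeing, up to renaming of colours, on the glued triangle. The one mathematical point to nail down is that these searches are finite and complete modulo the natural equivalence on list assignments (renaming colours, deleting colours not appearing on a boundary triangle), together with the observation that consecutive triangles in a linear system are vertex-disjoint, so that no segment has spacing below one and the enumerated spacings $1$ through $4$ really cover all of them.
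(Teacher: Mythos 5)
Your proof reconstructs exactly the argument the paper leaves implicit: the splitting principle (the prism-canvas analogue of Lemma~\ref{lemma-cyclecan}), the completeness of the skeleton-plus-cycle-canvas enumeration for critical prism-canvases of small spacing, the closure of that list under gluing at a separating triangle, and an induction on the number of segments that combines these facts. The paper never writes this out as a formal proof---Theorem~\ref{thm:prismcanvasresults} is presented as a summary of the computation described in Section~\ref{ssec-prism}---so your reconstruction is both correct and essentially identical to the intended argument.

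One small caveat on your final paragraph: the claim that consecutive triangles in a linear system are necessarily vertex-disjoint is not justified by the definition; two distinct triangles both separating the boundary faces can share a vertex or even an edge, so a segment could in principle have spacing zero. This does not endanger the argument---the skeleton construction and the cycle-canvas enumeration up to circumference $14$ in fact also cover spacing zero (the relevant pasted canvases then have circumference at most $6$), so the computation, rather than the asserted disjointness, is what handles that case---but the observation as stated should not be leaned on.
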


\subsection{Critical Graphs on the Torus}\label{ssec-torus}

Suppose that $G$ is a graph drawn on the torus critical for 5-list coloring, and let $T$ be
a non-contractible triangle in $G$.  Let $(F,f_1,f_2)$ be the $T$-fragment of $G$,
and let $T_1$ and $T_2$ the triangles bounding $f_1$ and $f_2$.  Analogously to 
Lemma~\ref{lemma-cyclecan}, there exists a 5-list assignment $L$ such that $Q=(F,T_1,T_2,L)$ is a critical prism-canvas.
Moreover, if $T$ is contained in a cyclic system of non-contractible triangles of spacing $d$ in $G$, then $Q$ has
a linear system of separating triangles of spacing at most~$d$.

By Theorem~\ref{thm:prismcanvasresults}, the following claim holds.
\begin{corollary}
Let $G$ be a  graph drawn on the torus critical for 5-list coloring, and let $T$ be a non-contractible triangle in $G$.  If $T$ is contained in a cyclic system of non-contractible triangles of spacing at most four, then the $T$-fragment of $G$ is a critical prism-canvas of spacing at most four.
\end{corollary}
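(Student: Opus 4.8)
The plan is to follow the sketch given just before the statement: attach to the non-contractible triangle $T$ its $T$-fragment, upgrade it to a \emph{critical} prism-canvas by a projection argument analogous to Lemma~\ref{lemma-cyclecan}, translate the cyclic system of non-contractible triangles into a linear system of separating triangles inside that prism-canvas, and then invoke Theorem~\ref{thm:prismcanvasresults}.

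First I would fix a $5$-list assignment $L$ witnessing that $G$ is critical for $5$-list coloring, so that $G$ is $L$-critical. Let $(F,f_1,f_2)$ be the $T$-fragment of $G$, let $\pi$ be the associated natural projection, and let $T_1,T_2$ be the triangles bounding $f_1,f_2$, each mapped onto $T$ by $\pi$. Taking $L'=L_F$ (so $L'(v)=L(\pi(v))$), the triple $Q=(F,T_1,T_2,L')$ is a prism-canvas, since $|L'(v)|\ge 5$ whenever $v\notin V(T_1\cup T_2)$. As $K_3$ is $5$-choosable we have $G\neq T$, and since the cut only duplicates $T$, the extra vertex or edge of $G$ survives in $F$ off $T_1\cup T_2$; hence $F\neq T_1\cup T_2$. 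To see that $Q$ is critical, fix a proper subgraph $Z$ of $F$ with $T_1\cup T_2\subseteq Z$ and let $Z_0$ be the subgraph of $G$ consisting of $\pi(Z)$. Since $Z$ contains both boundary triangles, every vertex or edge of $F$ missing from $Z$ lies in the region where $\pi$ is injective, so $Z_0$ is a proper subgraph of $G$ containing $T$. As $G$ is $L$-critical, $Z_0$ has an $L$-coloring $\varphi_0$; pulling it back along $\pi$ gives an $L'$-coloring $\varphi$ of $Z$ whose restriction $\psi$ to $T_1\cup T_2$ assigns, to the two $\pi$-preimages of each vertex of $T$, the same color. If $\psi$ extended to an $L'$-coloring of $F$, that coloring would descend along $\pi$ to an $L$-coloring of $G$ — well-defined exactly because it agrees on the two copies of each vertex of $T$ — contradicting $L$-criticality of $G$. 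Thus $\psi$ extends to $Z$ but not to $F$, so $F$ is $(T_1\cup T_2)$-critical with respect to $L'$ and $Q$ is a critical prism-canvas.

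Next I would build the linear system. Write the given cyclic system as $\CC=C_1,\dots,C_m$ with $T=C_1$. Each $C_j$ is laminar with $C_1$, so $C_j=\pi(C_j'')$ for a triangle $C_j''$ in $F$ separating $f_1$ from $f_2$; distinctness of the $C_j$ forces the $C_j''$ to be pairwise distinct and distinct from $T_1,T_2$. The defining condition on a cyclic system — that each $\CC$-fragment contains no triangle of $\CC$ besides its two boundary triangles — forces $C_2'',\dots,C_m''$ to sit nested in $F$ in precisely this order between the two copies $T_1,T_2$ of $C_1$, so $T_1,C_2'',\dots,C_m'',T_2$ is a linear system of separating triangles in $Q$. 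Its $i$-th prism-canvas is isomorphic to the lift along $\pi$ of the $(C_i,C_{i+1})$-fragment of $G$, with indices read cyclically and $C_1$ playing the role of both endpoints, so its spacing equals the spacing of that $\CC$-fragment, hence is at most the spacing of $\CC$, hence at most four. Therefore $Q$ is a critical prism-canvas containing a linear system of separating triangles of spacing at most four, and Theorem~\ref{thm:prismcanvasresults} gives that $Q$ has spacing at most four, as claimed.

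I expect the two delicate points — neither deep, but each deserving care — to be: (i) the descent of an $L'$-coloring of $F$ to an $L$-coloring of $G$, which depends on the coloring agreeing on the two $\pi$-preimages of $T$ and is the step where cutting along a non-separating curve departs from the setting of Lemma~\ref{lemma-cyclecan}; and (ii) verifying that the lifted triangles $C_2'',\dots,C_m''$ really appear in $F$ in the cyclic order inherited from $\CC$, so that the prism-canvases of the linear system correspond one-to-one with the $\CC$-fragments and have the matching spacings. All of the genuine difficulty has already been packaged into Theorem~\ref{thm:prismcanvasresults}, so past those bookkeeping checks nothing more is needed.
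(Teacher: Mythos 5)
Your proposal is correct and follows exactly the route the paper sketches in the paragraph preceding the corollary: cut along $T$, argue ``analogously to Lemma~\ref{lemma-cyclecan}'' that the resulting prism-canvas is critical (with the extra care you correctly identify — that the descended coloring is well-defined because it agrees on the two preimages of $T$), translate the cyclic system into a linear system of separating triangles of spacing at most four, and finish with Theorem~\ref{thm:prismcanvasresults}. The paper leaves all of this at the level of a three-sentence outline, and you have filled in the details faithfully.
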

Thus, every graph on the torus which is critical for 5-list coloring and contains a cyclic system of non-contractible triangles of spacing at most four can be obtained by gluing the ends of a critical prism-canvas of spacing at most four.
We performed this procedure for the prism-canvases from our list obtained
in Section~\ref{ssec-prism}; all of the resulting toroidal graphs are either provably 5-choosable, or contain one of the four 6-critical toroidal
graphs obtained by Thomassen~\cite{thomassentorus} as subgraphs.  This finishes the proof of Theorem~\ref{thm-main}.

Moreover, this shows that Conjecture~\ref{con:tpp} implies Conjecture~\ref{conj-equiv} for graphs of edge-width three, i.e., the ones with
a non-contractible triangle.  Hence, to prove Conjecture~\ref{conj-equiv}, one needs to prove Conjectures~\ref{conj-ew4} and \ref{con:tpp}.

\section{Techniques}

In this section we will explain the computational procedures used to obtain the above 
results. There are two main aspects to explain:

\begin{enumerate}
	\item How do we generate (candidates for) critical cycle-canvases.  We have already explained how
	we obtain candidates for critical prism-canvases and critical graphs on
	the torus from them in Sections~\ref{ssec-prism} and~\ref{ssec-torus}.
	\item How do we test canvases and graphs on the torus for criticality. 
\end{enumerate}

In our computer program, we represents the canvases as plane graphs encoded by the rotation systems around vertices, and \emph{without}
list assignments.  For example, for cycle-canvases we store plane graphs $G$ with the outer face bounded by a cycle $C$ such that there 
possibly exists a list assignment $L$ so that $(G, C, L)$ is a critical cycle-canvas.  This sidesteps the issue that
the number of such lists assignments is typically quite large.

In order to identify isomorphic plane graphs, we compute a canonical form
for each graph: For each edge of the outer face, we compute a transcript of the DFS traversal of the graph
starting at that edge, in which edges exiting each vertex are processed in the order specified by the
rotation system (starting next to the edge through which we first arrived to the vertex).
We assign numbers to vertices in the order in which they are first visited, and as the transcript, we
print the vertex number each time it is revisited.  The canonical form is chosen to be the 
lexicographically smallest transcript among those computed. 

The implementation of the algorithms described in this section can be found at \url{https://github.com/FelixMorenoPenarrubia/5ListColorabilityOfToroidalGraphs}.

\subsection{Generating Critical Cycle-Canvases}

Our generation procedure for critical cycle-canvases is based on the following result:

\begin{theorem}[Cycle Chord or Tripod Theorem~\cite{fivelistcoloring2}]
\label{thm:cyclechordtripod}
If $(G, C, L)$ is a critical cycle-canvas, then either

\begin{enumerate}
\item $C$ has a chord in $G$, or
\item there exists a vertex $v \in V(G) \setminus V(C)$ with at least three neighbors 
on $C$ such that at most one of the faces of $G[\{v\} \cup V(C)]$ contains a vertex or 
an edge of $G$. 
\end{enumerate}
\end{theorem}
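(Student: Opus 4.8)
The plan is to prove the contrapositive by induction on $|V(G)|$: if $(G,C,L)$ is a cycle-canvas in which $C$ is chordless and no vertex witnesses the second alternative, then $G$ is not $C$-critical. I would begin with routine reductions. If $G=C$ we are done, since $C$-criticality requires $G\neq C$. If some vertex $u\in V(G)\setminus V(C)$ has degree at most four, then every $L$-coloring of $C$ extending to $G-u$ also extends to $G$ (colour $G-u$ and then $u$, which has at least $5-4=1$ free colour), so $G-u$ and $G$ admit the same precolorings of $C$ and $G$ is not $C$-critical; hence we may assume every vertex off $C$ has degree at least five. Likewise, a component $K$ of $G$ disjoint from $C$ is $L$-colorable (a planar graph with all lists of size at least five is colorable by Thomassen's theorem~\cite{thomassenplanargraphchoosable}), so $G-V(K)$ and $G$ admit the same precolorings of $C$; hence we may assume $G$ is connected, and in particular $G$ has a vertex off $C$.

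I would then split on whether some vertex off $C$ has at least three neighbours on $C$. Suppose not. Then I claim that every $L$-coloring $\psi$ of $C$ extends to $G$, whence $G$ is not $C$-critical (take the proper subgraph $C$ itself). I would prove this by Thomassen's list-colouring technique: repeatedly strip off a vertex lying next to the boundary, committing to its colour and deleting that colour from the lists of its not-yet-coloured neighbours so that it remains colourable once everything else is done; chordlessness of $C$ together with the bound of at most two neighbours on $C$ per vertex off $C$ keeps the colour reservations and the type of the instance under control. The cleanest way to run this is probably to prove a Thomassen-style strengthening by a separate induction, allowing $C$ to carry a precoloured sub-path and lists of size at least three elsewhere on it. This case is where the genuine list-colouring content lives, and I expect it to be the main obstacle.

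So from now on some vertex $v\in V(G)\setminus V(C)$ has neighbours $w_1,\dots,w_t$ on $C$ with $t\ge 3$, in cyclic order. The spokes $vw_j$ cut the open disk into regions $R_1,\dots,R_t$, where $R_j$ is bounded by the cycle $B_j=v\,w_j\,P_j\,w_{j+1}$ with $P_j$ the $w_j$--$w_{j+1}$ arc of $C$; every edge at $v$ not meeting $C$ lies in one $R_j$, so the faces of $G[\{v\}\cup V(C)]$ are exactly the outer face and the $R_j$. Since no vertex witnesses the second alternative, at least two of the $R_j$ contain a vertex or edge of $G$. Among all pairs consisting of a vertex $x\in V(G)\setminus V(C)$ with at least three neighbours on $C$ together with a nonempty fan-region of $x$, I would choose one, $(v,R)$, minimizing the number of vertices of $G$ drawn in the closure $\overline R$. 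Writing $G_R$ for the subgraph of $G$ in $\overline R$ and $B$ for the bounding cycle of $R$: since $t\ge 3$ some vertex of $C$ lies off $\overline R$, so $|V(G_R)|<|V(G)|$; and $B$ is chordless, because a chord of $B$ would be either a chord of $C$ (excluded) or an edge from $v$ into the interior of $P$, contradicting that the two ends of $P$ are consecutive neighbours of $v$ on $C$. Thus $(G_R,B,L)$ is a chordless cycle-canvas with fewer vertices than $G$.

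Finally I would apply the induction hypothesis to $(G_R,B,L)$. If it is not $B$-critical, there is a proper subgraph $G''\subsetneq G_R$ with $B\subseteq G''$ admitting exactly the same colourings of $B$ as $G_R$ does; replacing the contents of $R$ in $G$ by $G''$ yields a proper subgraph $G'\subsetneq G$ with $C\subseteq G'$, and since $R$ meets the rest of $G$ only along $B$, the canvases $G'$ and $G$ admit the same precolorings of $C$, so $G$ is not $C$-critical. Otherwise the induction hypothesis gives a vertex $v'\in V(G_R)\setminus V(B)$ with at least three neighbours on $B$ and at most one nonempty fan-region inside $G_R$; the faces of $v'$ that lie inside $R$ coincide with the corresponding faces in $G$, while the one or two regions of $v'$ touching $v$ are swallowed, in $G$, by a single face of $G[\{v'\}\cup V(C)]$ that also contains the nonempty part of $G$ outside $\overline R$. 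Hence: if the only nonempty region of $v'$ inside $G_R$ touches $v$ (or there is none), then in $G$ only that big face is nonempty, so $v'$ witnesses the second alternative, a contradiction; if the only nonempty region of $v'$ is one not touching $v$ and $v'$ has at least three neighbours on $C$, that region and $v'$ form a pair with strictly fewer vertices in its closure than $\overline R$, contradicting minimality; and in the remaining case $v'$ is adjacent to $v$, has exactly two neighbours on $C$, and its unique nonempty region $R'$ is bounded by $v'$ and an arc of $C$, so we recurse, applying the same analysis to the strictly smaller chordless cycle-canvas $(G_{R'},B',L)$. The recursion terminates because the vertex count strictly decreases, and the only outcome not leading to a contradiction is that $G$ is not $C$-critical, completing the induction. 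Besides the low-neighbours-on-$C$ case, the part needing care is exactly this bookkeeping — checking that a tripod of a sub-canvas either lifts to a tripod of $G$, violates the minimal choice of $(v,R)$, or descends into a strictly smaller sub-canvas, and that emptiness of fan-regions is inherited correctly when passing between $G_R$ and $G$.
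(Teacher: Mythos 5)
The paper does not prove this theorem: it is quoted verbatim from~\cite{fivelistcoloring2} and used as a black box, so there is no in-paper proof to compare against. Evaluating your sketch on its own merits, the overall shape (contrapositive, induction on $|V(G)|$, routine degree/component reductions, then splitting on whether some interior vertex has three or more neighbours on $C$) is a sensible way to approach the statement, but there is a genuine gap at the very place you flag as ``the main obstacle.''

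In the case where no vertex of $V(G)\setminus V(C)$ has three or more neighbours on $C$, you assert that \emph{every} $L$-colouring of $C$ extends to $G$, and propose to prove this by ``a Thomassen-style strengthening.'' This cannot be waved through. Peeling off $C$ leaves a plane graph in which the outer boundary vertices have residual lists of size at least three and interior vertices have lists of size at least five, and the statement ``outer cycle with $3$-lists, interior with $5$-lists implies $L$-colourable'' is false in general — Thomassen's precoloured-path theorem controls a precoloured path of length at most one, not a fully precoloured cycle, and there is no routine induction that converts one setup into the other (the precoloured set only grows if you try to peel $C$ vertex by vertex). Whatever additional structure (chordlessness of $C$, minimum interior degree $\geq 5$, the $\leq 2$-neighbour condition) is being exploited has to be made explicit, and one should also entertain the possibility that the intended conclusion of this case is not the strong statement ``every colouring extends'' but only the weaker ``$G$ is not $C$-critical,'' which may require a different reduction than deleting to $C$ itself. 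As a secondary remark, the recursion in your second case needs a cleaner termination argument: after descending into $(G_{R'},B',L)$, the cycle $B'$ contains $v'$ rather than $v$, so the three-way case split must be rephrased relative to the new apex at each level, and you must check that the ``lift to a tripod of $G$'' step still works when the big fan of the inner tripod is bounded partly by spokes of an earlier apex rather than by $C$; you acknowledge this bookkeeping but it is delicate enough to be another real obligation, not just routine verification.
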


This theorem together with Lemma~\ref{lemma-cyclecan} says that every critical cycle-canvas can either be decomposed into two smaller critical 
cycle-canvases through a chord in the outer face or it can be decomposed into a ``tripod'', a vertex $v$ 
with at least three neighbors in $C$, and possibly a smaller critical cycle-canvas contained in the only nonempty 
face incident with $v$.

Note that if the cycle-canvas $(G, C, L)$ is chordless, then in the second case, the cycle-canvas
in the non-empty face of $G[\{v\} \cup V(C)]$ is also chordless.
This implies that we can generate all critical chordless cycle-canvases by starting with the trivial cycle-canvas
and repeatedly adding tripods to the outside of the cycle-canvas.  Note that the cycle-canvases arising in this
way are not guaranteed to be critical, and we need to check this condition separately.  Because of Lemma~\ref{lemma-cyclecan},
once a cycle-canvas is shown not to be critical, we do not need to derive further cycle-canvases from it by addition of tripods.

If the tripod $(G, C, L)$ has circumference $\ell$ and the vertex $v$ in the conclusion of Theorem~\ref{thm:cyclechordtripod}
has at least four neighbors in $C$, or three neighbors that are not consecutive, then the faces of $G[\{v\} \cup V(C)]$
have length less than $\ell$.  Hence, if we already generated all critical chordless cycle-canvases of circumference less than
$\ell$, then $(G, C, L)$ can be obtained from one of them by the addition of a single tripod.  However, if the vertex $v$
is adjacent to three consecutive vertices of $C$, then the smaller cycle-canvas has the same circumference $\ell$.

In order to resolve this, we first generate all the cycle-canvases obtained from cycle-canvases of smaller circumference,
enqueue the resulting critical canvases, and then process the canvases from the queue and add tripods 
to three consecutive vertices in all possible ways, enqueueing the new potentially critical cycle-canvases that are found.

\subsection{Testing Graph and Canvas Criticality}

In this section, we explain how we test graphs and canvases for criticality. 
Let $K$ be a graph and let $s : V(K) \rightarrow \mathbb{N}$ be a function. We say that a list
assignment $L$ is an \emph{$s$-list assignment} if $|L(v)| \geq s(v)$ for all $v \in V(K)$. 
We say that $K$ is \emph{$s$-colorable} if $K$ is $L$-colorable for every $s$-list assignment $L$.
We say that $K$ is \emph{$s$-reducible} if there exists a proper subgraph $H \subsetneq K$ so that
for every $s$-list assignments $L$ to $K$, if $H$ is $L$-colorable then $K$ is 
$L$-colorable.  Clearly, if $K$ is $s$-colorable, then $K$ is also $s$-reducible,
since we can take $H$ to be the null graph.  We say that the graph $K$ is \emph{$s$-irreducible} if it is not $s$-reducible.

\begin{example}
\label{ex:wheel}
Consider the $5$-wheel $W$ with the central vertex $u$ and the rim $v_1\ldots v_5$.
Let $s(u) = 5$, $s(v_1) = s(v_2) = 3$, and $s(v_3) = s(v_4) = s(v_5) = 2$.
Then $W$ is $s$-colorable, and therefore $s$-reducible. 
\end{example}

\begin{example}
\label{ex:reduciblenoncolorable}
Let $K$ be the graph with $V(K) = \{u, v, w, x, y\}$ and $E(K) = \{uv, uw, ux, uy, vw, wx, xy\}$,
and let $s(u) = 4$ and $s(v) = s(w) = s(x) = s(y) = 2$.  Then $K$ is $s$-reducible, as can be seen by
considering the subgraph $H=K-wx$.
\end{example}

As we have discussed in the previous sections, given a graph $G$, a proper subgraph $T$ of $G$, and a function
$s_G:V(G)\to\mathbb{N}$ specifying the list size at each vertex of $G$, we need a conservative $T$-criticality test,
i.e., a procedure that answers ``yes'' if there exists an $s_G$-list assignment $L$ such that $G$ is $T$-critical with respect to $L$.
If such an $s_G$-list assignment does not exist, the answer can be either ``yes'' or ``no'', though preferably
it should have only a small false positive rate (a procedure that always answers ``yes'' would technically be correct, but
obviously useless).

Because of the following observation, we can test for irreducibility, instead.  For a subgraph $T$ of a graph $G$
and a vertex $v\in V(G)\setminus V(T)$, let $d_T(v)$ be the number of neighbors of $v$ in $T$.
Given a list assignment $L$ for $G$, let $s_{G,T,L}:V(G)\setminus V(T)\to \mathbb{Z}$ be defined by setting $s_{G,T,L}(v)=|L(v)|-d_T(v)$
for every $v\in V(G)\setminus V(T)$.

\begin{observation}\label{obs-critir}
Let $T$ be a proper subgraph of a graph $G$.
If $G$ is $T$-critical with respect to a list assignment $L$, then the subgraph $G-V(T)$ is $s_{G,T,L}$-irreducible.
\end{observation}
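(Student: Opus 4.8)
The plan is to argue by contraposition: assuming that $K = G - V(T)$ is $s_{G,T,L}$-reducible, we exhibit a proper subgraph $G' \subsetneq G$ with $T \subseteq G'$ such that every $L$-coloring of $T$ extending to $G'$ also extends to $G$, thereby contradicting the $T$-criticality of $G$. First I would unpack the definition of reducibility: there is a proper subgraph $H \subsetneq K$ such that for every $s_{G,T,L}$-list assignment $M$ for $K$, $M$-colorability of $H$ implies $M$-colorability of $K$. The natural candidate for $G'$ is the subgraph of $G$ on vertex set $V(G)$ whose edge set is $E(H) \cup E(T) \cup \{\,\text{edges of } G \text{ between } V(T) \text{ and } V(K)\,\}$ — that is, we delete exactly the edges of $K$ that are not in $H$ (and no vertices, no edges touching $T$). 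Since $H \subsetneq K$, this $G'$ is a proper subgraph of $G$ containing $T$.

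Next I would show that every $L$-coloring $\psi$ of $T$ that extends to an $L$-coloring of $G'$ also extends to an $L$-coloring of $G$. Suppose $\varphi$ is an $L$-coloring of $G'$ with $\varphi|_{V(T)} = \psi$. Define a list assignment $M$ on $K = G - V(T)$ by setting $M(v) = L(v) \setminus \{\varphi(u) : u \in V(T),\ uv \in E(G)\}$ for each $v \in V(K)$ — i.e., we remove from each vertex's list the colors already used by its neighbors in $T$ under $\varphi$. Then $|M(v)| \geq |L(v)| - d_T(v) = s_{G,T,L}(v)$, so $M$ is an $s_{G,T,L}$-list assignment for $K$. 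Crucially, $\varphi$ restricted to $V(K)$ is an $M$-coloring of $H$: it is proper on $H$ because $H \subseteq K$ and $\varphi$ is proper on $G' \supseteq T \cup H \cup (\text{cross edges})$, and it respects the lists $M$ precisely because we subtracted the colors of $T$-neighbors. By the defining property of reducibility, $K$ is then $M$-colorable; let $\varphi'$ be such an $M$-coloring. Finally, the combined coloring that agrees with $\psi$ on $V(T)$ and with $\varphi'$ on $V(K)$ is an $L$-coloring of $G$: it is an $L$-coloring because $M(v) \subseteq L(v)$; it is proper on $T$ and on $K$ by hypothesis; and it is proper across the $T$-$K$ edges because $M(v)$ was defined to exclude exactly the $\psi$-colors of $v$'s neighbors in $T$. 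This extends $\psi$ to $G$, the desired contradiction.

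The main obstacle — really the only point requiring care — is bookkeeping around the edges between $V(T)$ and $V(K)$: these edges are present in $G$ and must be present in $G'$ (we only delete edges within $K$), and the list-shrinking step $|L(v)| - d_T(v)$ must match the $s_{G,T,L}$ convention exactly, including the subtlety that $s_{G,T,L}$ may a priori take small or even non-positive values, in which case the relevant $M$-list could be empty; but then $K$ has no $M$-coloring vacuously forces a contradiction only if $H$ also has none, which is consistent, and in fact the implication "if $H$ is $M$-colorable then $K$ is $M$-colorable" handles this case automatically since the hypothesis fails. One should also note that reducibility quantifies over \emph{all} $s_{G,T,L}$-list assignments, so in particular it applies to the specific $M$ we constructed, which has lists of size \emph{at least} $s_{G,T,L}(v)$ — this monotonicity is exactly what the definition of $s$-list assignment provides. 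With these points checked, the observation follows.
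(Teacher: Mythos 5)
Your proof follows the same strategy as the paper's: build a proper subgraph of $G$ consisting of $H$, the part on $V(T)$, and the cross edges; use $T$-criticality to obtain an $L$-coloring of $T$ that extends to this subgraph; subtract the colors seen by each vertex from its $T$-neighbors to get a list assignment $M$ to which reducibility applies; and recombine into an $L$-coloring of $G$. (The paper runs the argument by direct contradiction, picking one $\psi_T$ that extends to $H'$ but not to $G$; you phrase it as contraposition showing every extension to $G'$ extends to $G$. These are the same proof.)

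One small but real inaccuracy in the explicit construction of $G'$: you write $E(G')=E(H)\cup E(T)\cup\{\text{cross edges between }V(T)\text{ and }V(K)\}$, but this omits any edge of $G$ with both endpoints in $V(T)$ that is not an edge of $T$ (a ``chord'' of $T$ in $G$). If $T$ is not an induced subgraph of $G$ — which the hypotheses permit, and which genuinely happens in the paper's applications, e.g., for $T=T_1\cup T_2$ in a prism-canvas of spacing one — then an $L$-coloring $\psi$ of $T$ could extend to your $G'$ while assigning the same color to two adjacent-in-$G$ vertices of $T$; the combined coloring $\psi\cup\varphi'$ would then fail to be a proper coloring of $G$ on that chord, invalidating the final step. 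The paper avoids this by taking $G[V(T)]$ rather than $T$ in the definition of $H'$, so that the extended coloring is guaranteed proper on all of $V(T)$. Your parenthetical description ``we delete exactly the edges of $K$ that are not in $H$ (and no vertices, no edges touching $T$)'' is actually the correct construction and matches the paper; it just disagrees with your displayed edge-set formula, where $E(T)$ should be replaced by $E(G[V(T)])$. Once that substitution is made, the argument is complete and identical to the paper's.
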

\begin{proof}
Suppose for a contradiction that $G-V(T)$ is $s$-reducible.  Hence, there exists a proper subgraph $H \subsetneq G-V(T)$
such that for every $s_{G,T,L}$-assignment $L'$ to $G-V(T)$, if $H$ is $L'$-colorable, then $G-V(T)$ is $L'$-colorable.
Let $H'$ be the proper subgraph of $G$ consisting of $G[V(T)]$, $H$, and all edges of $G$ between them.
Since $G$ is $T$-critical with respect to $L$, there exists an $L$-coloring $\psi_T$ of $T$ which extends to an $L$-coloring $\psi'$ of $H'$,
but not of $G$.

Let $L'(v) = L(v) \setminus \{\psi_T(u) : u \in V(T), uv \in E(G)\}$ for every $v\in V(G)\setminus V(T)$.
Then $L'$ is an $s_{G,T,L}$-list-assignment for $G-V(T)$ and $\psi'$ restricts to an $L'$-coloring of
$H$.  Thus, there exists an $L'$-coloring $\psi$ of $G-V(T)$.  However, then $\psi_T\cup \psi$ is an $L$-coloring of $G$ extending $\psi_T$,
since the choice of $L'$ ensures that the colors on the edges between $V(T)$ and $V(G)\setminus V(T)$ do not clash.
This is a contradiction.
\end{proof}

Therefore, to test whether a cycle-canvas or a prism-canvas could be critical, we remove the boundary subgraph $T$,
let $s(v) = 5 - d_T(v)$ for every remaining vertex $v$, and then test the resulting graph $F$ for $s$-reducibility. 
If $F$ turns out to be $s$-reducible, we know that the original canvas is not critical, and we can discard it.
To prove $s$-reducibility of a graph $G$, we note that irreducibility is hereditary in the following sense.
For a subgraph $S$ of a graph $G$ and a function $s:V(G)\to\mathbb{N}$, let us define
$s_{G,S}(v)=s(v)-d_S(v)$ for every $v\in V(G)\setminus V(S)$.

\begin{observation}\label{obs-irrsg}
Let $G$ be a graph and $s:V(G)\to\mathbb{N}$ be a function.  Let $G'$ be an induced subgraph of $G$ and let $S=G-V(G')$.
If $G$ is $s$-irreducible, then $G'$ is $s_{G,S}$-irreducible.
\end{observation}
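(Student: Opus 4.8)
The plan is to prove the contrapositive: assuming $G'$ is $s_{G,S}$-reducible, I will produce a proper subgraph of $G$ witnessing that $G$ is $s$-reducible. The whole argument is parallel to the proof of Observation~\ref{obs-critir}, with the deleted part $S$ playing the role that the precolored subgraph $T$ played there; the only new wrinkle is that now $S$ is not precolored but is colored ``on the fly''.

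So let $H\subsetneq G'$ be a proper subgraph such that for every $s_{G,S}$-list assignment $L'$ for $G'$, if $H$ is $L'$-colorable then $G'$ is $L'$-colorable. The first step is to define the candidate witness $H^\ast$: let it be the subgraph of $G$ with vertex set $V(H)\cup V(S)$ whose edges are those of $H$, those of $G[V(S)]$, and all edges of $G$ with one endpoint in $V(H)$ and the other in $V(S)$. Since $G'=G-V(S)$ is an induced subgraph and $H$ misses some vertex or edge of $G'$, this missing object also witnesses that $H^\ast$ is a \emph{proper} subgraph of $G$.

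The second step is to check that $H^\ast$ does the job. Fix an arbitrary $s$-list assignment $L$ for $G$ and suppose $H^\ast$ has an $L$-coloring $\phi$; I must extend this to an $L$-coloring of $G$. Pass to $G'$ by deleting the colors already blocked by $\phi$ on $S$: put $L'(v)=L(v)\setminus\{\phi(u):u\in V(S),\ uv\in E(G)\}$ for $v\in V(G')$. Then $|L'(v)|\ge |L(v)|-d_S(v)\ge s(v)-d_S(v)=s_{G,S}(v)$, so $L'$ is an $s_{G,S}$-list assignment for $G'$, and $\phi$ restricted to $V(H)$ is an $L'$-coloring of $H$ (it is proper on $H\subseteq H^\ast$, and it avoids the removed colors precisely because $H^\ast$ contains every edge between $V(H)$ and $V(S)$). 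By the choice of $H$ there is an $L'$-coloring $\psi$ of $G'$, and then $\phi|_{V(S)}\cup\psi$ is an $L$-coloring of $G$: it is proper inside $G[V(S)]$ by $\phi$, proper inside $G'$ by $\psi$, and proper on the cross edges because membership $\psi(v)\in L'(v)$ forbids exactly the colors that $\phi$ uses on the $S$-neighbors of $v$. Hence $G$ is $L$-colorable, completing the contradiction with $s$-irreducibility of $G$.

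I do not expect a genuine obstacle here; the proof is routine once $H^\ast$ is set up correctly. The two points that need a little care are (i) confirming that $H^\ast$ is really a proper subgraph of $G$ — this is where using that $G'$ is \emph{induced} matters, so that the deficiency of $H$ inside $G'$ is not ``filled in'' by edges of $G$ — and (ii) using the inequality $|L(v)|\ge s(v)$ rather than equality when bounding $|L'(v)|$, so the estimate is valid for every $s$-list assignment and not just for tight ones.
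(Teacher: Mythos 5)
Your proof is correct and follows essentially the same approach as the paper's: the witness subgraph $H^\ast$ is the same as the paper's $H$ (which consists of $S$, $H'$, and the cross edges), the shrunken list assignment $L'$ is defined identically, and the final step of gluing the two partial colorings together along the cross edges is the same. The only presentational difference is that you argue the contrapositive directly, whereas the paper frames it as a proof by contradiction.
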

\begin{proof}
Suppose for a contradiction that $G'$ is $s_{G,S}$-reducible.
Hence, there exists a proper subgraph $H' \subsetneq G'$
such that for every $s_{G,S}$-assignment $L'$ to $G'$, if $H'$ is $L'$-colorable, then $G'$ is $L'$-colorable.
Let $H$ be the proper subgraph of $G$ consisting of $S$, $H'$, and all edges of $G$ between them.
Since $G$ is $s$-irreducible, there exists an $s$-list assignment $L$ to $G$ such that $H$ is $L$-colorable
but $G$ is not $L$-colorable.

Let $\psi$ be an $L$-coloring of $H$, and for every $v\in V(G')$, let $L'(v)=L(v)\setminus \{\psi(u):u\in V(S),uv\in E(G)\}$.
Then $L'$ is an $s_{G,S}$-assignment for $G'$, and $\psi$ restricts to an $L'$-coloring of $H'$.
Thus, $G'$ has an $L'$-coloring $\psi'$.  Then $(\psi\restriction V(S))\cup \psi'$ is an $L$-coloring of $G$, since the
choice of $L'$ ensures that the colorings do not clash on the edges between $V(G')$ and $S$.
This is a contradiction, since $G$ is not $L$-colorable.
\end{proof}

Suppose that $T$ is a proper subgraph of a graph $G$ and $s_G:V(G)\to\mathbb{N}$ is a function specifying the list size at each vertex of $G$.
If we find an induced subgraph $G'$ of $G-V(T)$ which is $s_{G,G-V(G')}$-reducible,
then by Observations~\ref{obs-critir} and \ref{obs-irrsg}, $G$ is not $T$-critical with respect to any $s$-list-assignment,
and thus our procedure can correctly answer ``no''.

\subsubsection{The Alon-Tarsi Method}

In order to systematically detect $s$-colorable (and thus $s$-reducible) configurations,
we use the following result by Alon and Tarsi~\cite{alontarsi}, which provides an efficiently testable sufficient condition.
A subgraph $\vec{H}$ of a directed graph $\vec{G}$ is \emph{Eulerian} if the outdegree of each vertex of $\vec{H}$ is the
same as its indegree, and \emph{spanning} if $V(\vec{H})=V(\vec{G})$.  A directed graph $\vec{H}$ is \emph{even} if it has even number of edges, and \emph{odd} otherwise.

\begin{theorem}
\label{thm:alontarsi}
Let $\vec{G}$ be an orientation of a graph $G$, and let $L$ be an assignment of lists to vertices of $G$ such that 
$|L(v)| > \deg_{\vec{G}}^+(v)$ for every $v\in V(G)$. If $\vec{G}$ has a different number of even and odd spanning Eulerian subgraphs,
then $G$ has an $L$-coloring.
\end{theorem}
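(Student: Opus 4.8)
The plan is to prove the Alon--Tarsi theorem (Theorem~\ref{thm:alontarsi}) via the polynomial method. First I would associate to the oriented graph $\vec{G}$ its \emph{graph polynomial}
\[
P_{\vec G}(x_1,\dots,x_n) = \prod_{\vec{uv}\in E(\vec G)} (x_u - x_v),
\]
where for each edge we fix the sign according to its orientation (the tail variable minus the head variable). The key observation is that an $L$-coloring of $G$ is precisely an assignment of values $x_v\in L(v)$ making $P_{\vec G}$ nonzero, since $P_{\vec G}$ vanishes exactly when two adjacent vertices receive the same value. So it suffices to show that $P_{\vec G}$ does not vanish identically on $\prod_v L(v)$.

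The main tool is the Combinatorial Nullstellensatz (or, more directly here, the Alon--F\"uredi / quantitative Nullstellensatz): if a polynomial of degree $\sum_v t_v$ has a nonzero coefficient on the monomial $\prod_v x_v^{t_v}$, and each set $S_v$ satisfies $|S_v| \geq t_v + 1$, then the polynomial does not vanish identically on $\prod_v S_v$. Here I would take $t_v = \deg^+_{\vec G}(v)$, so that $\sum_v t_v = |E(G)| = \deg P_{\vec G}$, and the hypothesis $|L(v)| > \deg^+_{\vec G}(v)$ gives exactly $|L(v)| \geq t_v + 1$. Thus the whole theorem reduces to the combinatorial claim that the coefficient of $\prod_v x_v^{\deg^+_{\vec G}(v)}$ in $P_{\vec G}$ is nonzero.

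To compute that coefficient, expand $P_{\vec G} = \prod_{\vec{uv}} (x_u - x_v)$ by choosing, for each directed edge $\vec{uv}$, either the term $x_u$ (contributing to the out-degree count of $u$) or $-x_v$. A choice over all edges is equivalent to orienting each edge either ``forward'' (keep the original orientation, pick the tail) or ``backward'' (reverse it, pick the head, with a sign $-1$); the resulting monomial is $\prod_v x_v^{d^+(v)}$ where $d^+$ is the out-degree in the re-oriented digraph. Such a re-orientation contributes to the target monomial $\prod_v x_v^{\deg^+_{\vec G}(v)}$ exactly when it preserves every out-degree, i.e.\ when the set of reversed edges forms a spanning Eulerian subgraph of $\vec G$ (reversing an Eulerian subgraph is the only way to keep all out-degrees fixed, since the in/out-degree changes caused by reversal must cancel at every vertex). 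The sign contributed by reversing an Eulerian subgraph $\vec H$ is $(-1)^{|E(\vec H)|}$, which is $+1$ if $\vec H$ is even and $-1$ if odd. Hence the coefficient equals (number of even spanning Eulerian subgraphs) $-$ (number of odd spanning Eulerian subgraphs), which is nonzero by hypothesis, completing the proof.

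The main obstacle is getting the bijection in the previous paragraph exactly right --- namely verifying carefully that the monomial selections yielding $\prod_v x_v^{\deg^+_{\vec G}(v)}$ correspond bijectively to spanning Eulerian subgraphs, including the bookkeeping that a ``spanning'' subgraph here just means a subgraph on the full vertex set (isolated vertices allowed), so that the empty subgraph counts and the correspondence is genuinely a bijection rather than merely a surjection. The Nullstellensatz input is standard and can be cited; the degree count $\sum_v \deg^+_{\vec G}(v) = |E(G)|$ is immediate. I would state the Combinatorial Nullstellensatz as a black box and spend the bulk of the write-up on the coefficient computation.
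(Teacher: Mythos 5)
The paper does not prove Theorem~\ref{thm:alontarsi}; it is stated as a cited result of Alon and Tarsi. Your proof via the graph polynomial and the Combinatorial Nullstellensatz is correct and is the standard modern presentation of the theorem. The key steps all check out: with $P_{\vec G}=\prod_{\vec{uv}\in E(\vec G)}(x_u-x_v)$ we have $\deg P_{\vec G}=|E(G)|=\sum_v\deg^+_{\vec G}(v)$, so $\prod_v x_v^{\deg^+_{\vec G}(v)}$ is a top-degree monomial; the expansion indeed identifies monomial selections with re-orientations of $\vec G$, those preserving all out-degrees are exactly the ones whose reversed edge sets are spanning Eulerian, and the sign is $(-1)^{|E(\vec H)|}$, so the coefficient equals the difference you claim. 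Your remark about ``spanning'' meaning isolated vertices are allowed (so the empty subgraph counts) is exactly the bookkeeping point one must make. Two small notes: the tool you want to cite is Alon's Combinatorial Nullstellensatz rather than the Alon--F\"uredi theorem (the latter bounds the number of non-vanishing points, which is stronger than needed and is a separate result); and historically the original Alon--Tarsi argument (1992) predates the Combinatorial Nullstellensatz (1999) and works directly with the polynomial over a large enough field, but it is morally the same computation and your Nullstellensatz packaging is the one found in most current references.
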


There are more efficient ways to implement the Alon-Tarsi method than a naive 
simulation of the above statement; for a thorough study of the implementation
and efficiency of the Alon-Tarsi method in practice, see~\cite{dvorakefficientalontarsi}.
We need the following obvious consequence of this theorem.
\begin{corollary}
Let $K$ be a graph and $s:V(K)\to \mathbb{N}$ a function.  Let $\vec{K}$ be an orientation
of $K$ such that $s(v)>\deg_{\vec{K}}^+(v)$ for every $v\in V(K)$.  If $\vec{K}$ has a different number of even
and odd spanning Eulerian subgraphs, then $K$ is $s$-colorable.
\end{corollary}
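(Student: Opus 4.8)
The plan is to deduce the statement directly from Theorem~\ref{thm:alontarsi} by showing that every $s$-list assignment for $K$ fits the hypothesis of that theorem with respect to the fixed orientation $\vec{K}$. First I would fix an arbitrary $s$-list assignment $L$ for $K$; this means $|L(v)|\ge s(v)$ for every $v\in V(K)$. By the assumption on the orientation $\vec{K}$ we have $s(v)>\deg_{\vec{K}}^+(v)$ for every vertex $v$, and combining the two inequalities gives $|L(v)|\ge s(v)>\deg_{\vec{K}}^+(v)$, i.e.\ $|L(v)|>\deg_{\vec{K}}^+(v)$ for all $v\in V(K)$. Thus the orientation $\vec{K}$ and the list assignment $L$ satisfy exactly the hypotheses of Theorem~\ref{thm:alontarsi}.

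Next I would invoke the other hypothesis, which is already given: $\vec{K}$ has a different number of even and odd spanning Eulerian subgraphs. Theorem~\ref{thm:alontarsi} then yields an $L$-coloring of $K$. Since $L$ was an arbitrary $s$-list assignment, this shows that $K$ is $L$-colorable for every $s$-list assignment $L$, which by definition means $K$ is $s$-colorable. This completes the argument.

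There is essentially no obstacle here: the only point of substance is the chaining of the two inequalities $|L(v)|\ge s(v)$ and $s(v)>\deg_{\vec{K}}^+(v)$ to verify the outdegree condition of Theorem~\ref{thm:alontarsi}, and the observation that the Eulerian-subgraph parity condition is a property of $\vec{K}$ alone and hence is inherited unchanged. Everything else is unwinding definitions, so the proof is a one-line consequence of the Alon--Tarsi theorem once the list-size bound is matched up with the outdegree bound.
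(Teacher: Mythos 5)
Your proof is correct and matches the intended argument: the paper labels this an ``obvious consequence'' of Theorem~\ref{thm:alontarsi} and gives no explicit proof, and your chaining of $|L(v)|\ge s(v)>\deg_{\vec{K}}^+(v)$ followed by a direct application of the theorem for an arbitrary $s$-list assignment is exactly that consequence.
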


Let us now return to the considered situation, where we are given a graph $G$, a proper subgraph $T$ of $G$, and a function
$s_G:V(G)\to\mathbb{N}$ specifying the list size at each vertex of $G$, and we look for reducible induced subgraphs of $G-V(T)$.
We need to decide to which induced subgraphs we apply the Alon-Tarsi test, since applying it to all induced subgraphs of $G-V(T)$
is prohibitively time-consuming.  We proceed as follows: We start by letting $G_0=G-V(T)$.
If the Alon-Tarsi test returns a positive answer for $G_0$ and $s_0=s_{G,G-V(G_0)}$, we are done.
If the answer is negative, the issue often is that $G_0$ has a small proper subgraph $B_0$ which is not $s_0$-colorable---we
want to exclude this subgraph from consideration and look for a reducible induced subgraph only in the complement of $B_0$.
The same issue may of course arise in $G_0-V(B_0)$, and thus we iterate this procedure.

More precisely, for $i=0,1,\ldots$, if $G_i$ is non-empty, we perform the Alon-Tarsi test for $G_i$ and $s_i=s_{G,G-V(G_i)}$.
If the answer is positive, the induced subgraph $G_i$ is $s_i$-colorable, and thus $s_i$-reducible, and we answer ``no''.
If the answer is negative, then let $B_i$ be an induced subgraph of $G_i$ obtained by greedily deleting vertices
whose removal preserves the negative answer to the $s_i$-colorability Alon-Tarsi test, and let $G_{i+1}=G_i-V(B_i)$.
If an empty graph $G_i$ is reached, the procedure fails and we answer ``yes''.

Let us remark that when looking for $B_i$, the function $s_i$ specifying the list sizes stays the same,
i.e., we do not replace it with $s_{G,G-V(B_i)}$.  
Overall, the results of this heuristic turned out to be very satisfactory. 

The Alon-Tarsi test does not always detect colorable graphs successfully. 
For example, the graph in Example~\ref{ex:wheel} is not recognized by the Alon-Tarsi test as being $s$-colorable.
And we also have non-colorable reducible configurations (as in Example \ref{ex:reduciblenoncolorable}), for which Alon-Tarsi does not apply. 
Before performing the Alon-Tarsi based test
as described above, our procedure tests the presence of these two special configurations, and if one
is found, we answer ``no'' directly.

Let us remark that for bigger graphs, the Alon-Tarsi test can be rather slow,
and in particular it is slower than the search for fixed small induced subgraphs.
Hence, it is advisable to execute less expensive tests before running the Alon-Tarsi test,
which may include searching for a list of small, fixed reducible configurations, or running simple
colorability heuristics.

A heuristic that we have used in our implementation tries
to greedily construct a coloring by making coloring choices that are valid for any $s$-list-assignment;
i.e., in general, when we decide to assign a color to a vertex $v$, we decrease $s$ by one at all
adjacent vertices, thus simulating the possibility that the color given to $v$ appeared in their lists.
However, when possible, we make use of the following two observations to eliminate this decrease:

\begin{itemize}
    \item If $u$ and $v$ are neighbors with $|L(v)| > |L(u)|$, then we can color
    $v$ with a color not in the list of $u$, hence not decreasing the number of
    available colors for $u$.
    \item If $u, v, w$ are vertices such that $u$ and $v$ are neighbors and $u$ and $w$ are neighbors but $v$ and $w$ are not neighbors, and $|L(v)| + |L(w)| > |L(u)|$,
    then either $L(v)$ and $L(w)$ are disjoint, and hence we can color one of $v$
    and $w$ with a color not in $L(u)$; or we can color $v$ and $w$ with the same color. In either case, the number of available colors for $u$ can only decrease by one after coloring both $v$ and $w$. 
\end{itemize}

This heuristic is quite fast and it relatively frequently detects $s$-colorability of larger subgraphs, for which
performing the Alon-Tarsi test would be very slow.

\section{Acknowledgments}

The results of this paper form part of the bachelor thesis of the second author, under the guidance of the first author. The second author would like to thank O. Baeza, E. Moreno, J. Pérez, X. Povill and M. Prat for sharing some computation time. 

\printbibliography

\end{document}